\newcommand{\vol}{\mathrm{Vol}}
\newcommand{\ep}{\epsilon}
\begin{document}

\title[Intersection of a simplex and a sphere]
{A note about the uniform distribution on the intersection of a simplex and a sphere}
\author{Sourav Chatterjee}
\address{Courant Institute of Mathematical Sciences, New York University, 251 Mercer Street, New York, NY 10012}
\thanks{Sourav Chatterjee's research was partially supported by  NSF grants DMS-0707054 and DMS-1005312,    and a Sloan Research Fellowship}
\subjclass[2000]{35Q55, 82B26, 82B10, 60F10}
\keywords{Simplex, sphere, $\ell_p^n$ ball, large deviation}

\begin{abstract}
Uniform probability distributions on $\ell_p$ balls and spheres have been studied extensively and are known to behave like product measures in high dimensions. In this note we consider the uniform distribution on the intersection of a simplex and a sphere. Certain new and interesting features, such as  phase transitions and localization phenomena emerge.
\end{abstract}
\maketitle

\section{Introduction}\label{intro}
Take a real number number $b > 1$ and a positive integer $n$, and consider the set
\[
{\textstyle\{x = (x_1,\ldots,x_n)\in \rr^n: \sum_1^n |x_i| = n, \ \sum_1^n x_i^2 = nb\}.}
\]
This is the intersection of an $\ell_1^n$ sphere and an $\ell_2^n$ sphere in $\rr^n$. By sign symmetry, to study the above object it suffices to study
\begin{align}\label{kdef}
K := {\textstyle\{x = (x_1,\ldots,x_n)\in \rr_+^n: \sum_1^n x_i = n, \ \sum_1^n x_i^2 = nb\},}
\end{align}
where $\rr_+$ denote the set of all positive real numbers. Consider the uniform distribution on this set, defined as the limit of normalized Lebesgue measures on thin shells around this set as the thickness of the shells tend to zero. 

 Note that $b$ has to range between $1$ and $n$ for $K$ to be non-empty. We are mainly interested in $b$ fixed and $n\ra\infty$. 
Let $X = (X_1,\ldots,X_n)$ be a random vector following the uniform distribution on $K$. Let us omit the trivial case $b=1$, when all coordinates are exactly equal to $1$. The first theorem covers the range $1< b\le 2$. 
\begin{thm}\label{thm1}
Suppose $1< b\le 2$. Then there exist unique $r,s\in \rr$ such that the probability density proportional to $\exp(-rx^2-sx)$ on $[0,\infty)$ has first moment $1$ and second moment $b$. Let $Z_1,Z_2,\ldots$ be i.i.d.\ random variables following this density. The following hold:
\begin{enumerate}
\item[$(a)$] For any fixed $k$, the random vector $(X_1,\ldots,X_k)$ converges in law to $(Z_1,\ldots,Z_k)$ as $n \ra \infty$. 
\item[$(b)$] All joint moments of $(X_1,\ldots,X_k)$ converge to the corresponding moments of $(Z_1,\ldots,Z_k)$. 
\item[$(c)$] If $b < 2$, there is a constant $C$, possibly depending on $b$, such that 
\[
\lim_{n\ra \infty}\pp\bigl(\max_{1\le i\le n} X_i > C\sqrt{\log n}\bigr) = 0.
\]
\item[$(d)$] When $b=2$, part $(c)$ holds but with $\log n$ instead of $\sqrt{\log n}$. 
\end{enumerate}
\end{thm}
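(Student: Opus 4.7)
The overall strategy is to realize $X$ as a vector of i.i.d.\ samples conditioned on the two sum constraints, and then extract all four conclusions from a bivariate local central limit theorem.

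\emph{Step 1 (existence of $r,s$ and the conditioning identity).}
On $[0,\infty)$ the family $\mu_{r,s}(dz)\propto e^{-rz^2 - sz}\,dz$ (with $r\ge 0$, and $s > 0$ when $r = 0$) is a regular two-parameter exponential family; its log-partition function is strictly convex, so the map $(r,s)\mapsto (m_1(r,s),m_2(r,s))$ is a diffeomorphism onto the interior of its image. On the slice $m_1 = 1$, Cauchy--Schwarz forces $m_2 > 1$; the exponential case $(r,s) = (0,1)$ realises $m_2 = 2$; and $r\ra\infty$ concentrates $\mu_{r,s}$ at $1$. Hence $r,s$ exist uniquely for each $b\in(1,2]$. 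Let $Z_1,\ldots,Z_n$ be i.i.d.\ from $\mu_{r,s}$ and set $g(z) = (\sum z_i, \sum z_i^2)$. Both the product density $\prod f(z_i) = Z(r,s)^{-n} e^{-rnb - sn}$ and the Gram determinant
\begin{equation*}
\det\bigl(Dg(z)\,Dg(z)^T\bigr) \;=\; \det\begin{pmatrix}n & 2n\\ 2n & 4nb\end{pmatrix} \;=\; 4n^2(b-1)
\end{equation*}
are constant on $K$. The coarea formula then identifies the conditional law of $(Z_1,\ldots,Z_n)$ given $g(Z) = (n,nb)$ with the uniform (Hausdorff) measure on $K$, giving $(X_1,\ldots,X_n) \stackrel{d}{=} (Z_1,\ldots,Z_n) \mid \{g(Z) = (n,nb)\}$.

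\emph{Step 2 (parts (a) and (b)).}
Write $p_m$ for the joint density of $\bigl(\sum_1^m Z_i, \sum_1^m Z_i^2\bigr)$, which exists for $m\ge 2$. Since $f$ is smooth and rapidly decaying and $(Z_1, Z_1^2)$ has non-singular covariance $\Sigma$, the bivariate local CLT gives, uniformly on compacts,
\begin{equation*}
m\,p_m\bigl((m,mb) + \sqrt m\,u\bigr) \;\longrightarrow\; (2\pi)^{-1}(\det\Sigma)^{-1/2}\,e^{-u^T\Sigma^{-1}u/2}.
\end{equation*}
Combining this with the conditioning identity yields
\begin{equation*}
\pp(X_1\in dz_1,\ldots,X_k\in dz_k) \;=\; f(z_1)\cdots f(z_k)\;\frac{p_{n-k}\bigl(n - \sum z_i,\,nb - \sum z_i^2\bigr)}{p_n(n,nb)}\,dz_1\cdots dz_k,
\end{equation*}
and the ratio on the right tends to $1$ pointwise for each fixed $(z_1,\ldots,z_k)$. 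This proves (a). For (b), a uniform $O(1)$ bound on this ratio over any compact set, combined with the super-exponential decay of $f$ to dominate tails, upgrades the pointwise convergence to moment convergence.

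\emph{Step 3 (parts (c) and (d)).}
A union bound gives $\pp(\max_i X_i > t) \le n\,\pp(X_1 > t)$, where
\begin{equation*}
\pp(X_1 > t) \;=\; \int_t^{\sqrt{nb}} f(z)\,\frac{p_{n-1}(n-z,\,nb - z^2)}{p_n(n,nb)}\,dz.
\end{equation*}
The local CLT gives $p_n(n,nb)\asymp n^{-1}$, and a standard smoothing/Fourier argument gives $\|p_{n-1}\|_\infty = O(n^{-1})$, so the ratio inside the integral is $O(1)$ uniformly in $z$. Hence $\pp(X_1 > t) \le C\int_t^\infty f(z)\,dz$, which is $O(e^{-rt^2})$ when $r > 0$ (case $b < 2$) and $O(e^{-st})$ when $r = 0$ (case $b = 2$). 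Taking $t = C\sqrt{\log n}$ with $rC^2 > 1$ yields (c), and $t = C\log n$ with $sC > 1$ yields (d).

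\textbf{Main obstacle.} The technical core is the bivariate local CLT for $(\sum Z_i, \sum Z_i^2)$. Its joint characteristic function $\int_0^\infty e^{i\xi z + i\eta z^2}f(z)\,dz$ decays only polynomially in $(\xi,\eta)$ (by stationary phase near the boundary $z = 0$), so Fourier inversion requires convolving a constant number of copies before one obtains a uniformly well-behaved density. Establishing both the pointwise convergence used in (a) and the uniform sup-norm bound $\|p_m\|_\infty = O(m^{-1})$ used in (c) and (d) at this level of precision is the one step in the proof that is not immediate from soft arguments; given it, the rest reduces to exponential-family calculus and union-bounding.
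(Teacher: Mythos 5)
Your proof is correct but follows a genuinely different route from the paper's. The paper avoids singular conditioning altogether: it characterizes the uniform measure on $K$ via rotational invariance of $S(0,1)$ pulled through the affine map $l$, dominates expectations over $K$ by those over a positively-tilted thickening $K^\ep$ (Proposition~\ref{imp}), shows the uniform law on $K^\ep$ agrees with $G^{\otimes n}_{r,s}$ conditioned on $K^\ep$ up to a factor $e^{\pm B\ep n}$ (Lemma~\ref{bdlmm}) which is made negligible by taking $\ep = n^{-10}$, and then controls Lipschitz functionals of $X$ via Hoeffding plus the local CLT. You instead identify the law of $X$ \emph{exactly} as $G^{\otimes n}_{r,s}$ conditioned on $\{g(Z)=(n,nb)\}$ via the coarea formula, exploiting the key observation that both the product density $\prod f(z_i)=Z(r,s)^{-n}e^{-rnb-sn}$ and the Gram determinant $\det(Dg\,Dg^T)=4n^2(b-1)$ are constant on $K$, and then read off the marginal of $(X_1,\ldots,X_k)$ as an explicit ratio of local-CLT densities. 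Your route is shorter, yields exact density formulas rather than one-sided inequalities pushed through two comparison steps, and treats all four parts from the same expression; the paper's route stays within elementary comparison arguments and sidesteps the Borel--Kolmogorov subtlety implicit in disintegrating over a measure-zero level set (though, as you note, the constancy of the Jacobian makes the disintegration unambiguous here). Both approaches have the same technical core: the bivariate local CLT for $(\sum Y_i,\sum Y_i^2)$ together with the sup-norm bound $\|p_m\|_\infty=O(1/m)$, which the paper obtains by proving boundedness of the three-term density (Lemma~\ref{bdddens}) and invoking the classical uniform local CLT, and which you correctly flag as the one nontrivial input. A small gap in your Step~1: ``$r\to\infty$ concentrates $\mu_{r,s}$ at $1$'' needs a word on how $s$ must track $r$ along the slice $m_1=1$ (one needs $s\to-\infty$ with $s/r\to-2$); the paper achieves the same range of $m_2$ by a two-leg IVT path, first through $\theta(r,1)$ and then through $\theta(1/(1-u),-2u/(1-u))$. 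Your strict-convexity argument for uniqueness of $(r,s)$ is, on the other hand, cleaner and more direct than the paper's remark that uniqueness follows a posteriori from the convergence in law.
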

Note that in general $s$ can be negative. When $b=2$, $r$ and $s$ turn out to be $0$ and $1$; in other words $Z_1\sim Exp(1)$ when $b=2$. 

The next theorem describes the situation when $b > 2$. An interesting localization phenomenon occurs in this regime.  
\begin{thm}\label{thm2}
Suppose $b > 2$. Let $Z_1,Z_2,\ldots$ be i.i.d.\ $Exp(1)$ random variables. Then the following hold.
\begin{enumerate}
\item[$(a)$] For any fixed $k$, the random vector $(X_1,\ldots,X_k)$ converges in law to $(Z_1,\ldots,Z_k)$ as $n \ra \infty$.
\item[$(b)$] Convergence of moments does not happen, because $\ee(X_1^2)=b$ for any $n$ and $\ee(Z_1^2) = 2$. 
\item[$(c)$] Let $M = \max_{1\le i\le n} X_i$. Then
\[
\frac{M^2}{(b-2) n} \ra 1 \text{ in probability.}
\]
Consequently, the sum of squares of all other coordinates is roughly $2n$ with high probability. 
\item[$(d)$] Let $M_2$ be the value of the second largest coordinate. Then 
\[
\frac{M_2^2}{n} \ra 0 \text{ in probability.}
\] 
\end{enumerate}
\end{thm}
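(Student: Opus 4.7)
The plan is to prove the localization statement (c) first, and to deduce (a) and (d) from it by reducing to the $b=2$ case of Theorem~\ref{thm1}. Part (b) is immediate: by exchangeability, $\ee(X_1^2) = \frac{1}{n}\sum_i \ee(X_i^2) = b$, while $\ee(Z_1^2) = 2$. The basic tool is the \emph{exponential representation}: if $Y_1,\ldots,Y_n$ are i.i.d.\ $Exp(1)$, then the joint density $\exp(-\sum y_i)$ is constant on the simplex $\sum y_i=n$, so the conditional law of $(Y_1,\ldots,Y_n)$ given $\sum Y_i=n$ and $\sum Y_i^2=nb$ is exactly uniform on $K$. Since $\pp(Y_i^2>t)=e^{-\sqrt{t}}$, the variables $Y_i^2$ have a stretched-exponential (sub-exponential) tail; when $b > 2 = \ee(Y_i^2)$, the cheapest way for the rare event $\sum Y_i^2 \approx nb$ to occur is by \emph{condensation}, with a single coordinate taking the extreme value $\sqrt{(b-2)n}$ while the other $n-1$ coordinates sit near the typical configuration of sum $\approx n$ and sum of squares $\approx 2n$. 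This single mechanism drives every phenomenon in the theorem.

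To prove (c), let $g_m(a,c)$ denote the joint density of $(\sum_{i=1}^m Y_i,\sum_{i=1}^m Y_i^2)$ for $m$ i.i.d.\ $Exp(1)$ variables. Through the exponential representation, the marginal density of $X_1$ at $x$ equals $e^{-x}\,g_{n-1}(n-x,\,nb-x^2)/g_n(n,nb)$. The core estimate is the large-deviation asymptotic $g_n(n,nb)$ is of order $n\cdot e^{-\sqrt{(b-2)n}}\cdot g^\ast_{n-1}$ up to polynomial corrections, where $g^\ast_{n-1}$ is the typical joint density at $(n,2n)$ for $n-1$ i.i.d.\ exponentials; the lower bound comes from the explicit condensation construction (summed over the identity of the extreme coordinate), and the matching upper bound needs to rule out alternatives such as two moderate spikes or a smooth deviation of all coordinates. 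Given this, the union bound $\pp(M>t)\le n\,\pp(X_1>t)$ handles values of $t$ far above $\sqrt{(b-2)n}$, while integrating the marginal density of $X_1$ over $[0,t]$ and using the same asymptotics for the numerator $g_{n-1}(n-x,nb-x^2)$ shows that a value of $t$ far below $\sqrt{(b-2)n}$ forces the surviving coordinates to pay a strictly larger exponential cost than is available.

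Once (c) is in hand, (a) and (d) follow by conditioning on the identity $i^\ast$ and the value of the maximum. With probability tending to one, the maximum is unique and $X_{i^\ast}\approx\sqrt{(b-2)n}$, so the remaining $n-1$ coordinates are uniformly distributed on the analogous intersection with sum $n-M$ and sum of squares $nb-M^2\approx 2n$; the constraint that they all be $\le M$ is automatic once $M$ is of order $\sqrt n$. After rescaling by the factor $(n-1)/(n-M)\ra 1$, this becomes a copy of $K$ in dimension $n-1$ with $b$ replaced by $b''=(n-1)(nb-M^2)/(n-M)^2\ra 2$ in probability. Applying Theorem~\ref{thm1}$(d)$ (in a version uniform as $b\uparrow 2$, which must be extracted from its proof) bounds the maximum of the rescaled coordinates by $O(\log n)=o(\sqrt{n})$, giving (d); applying Theorem~\ref{thm1}$(a)$ to any $k$ fixed indices outside $\{i^\ast\}$ yields the marginal convergence to i.i.d.\ $Exp(1)$ in (a). The main obstacle is the condensation estimate for $g_n(n,nb)$ in paragraph two; the rest of the argument is either routine bookkeeping or direct appeal to Theorem~\ref{thm1}.
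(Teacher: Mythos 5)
Your proposal captures the correct mechanism (condensation of the excess $\ell^2$ mass onto a single coordinate), and the ``exponential representation'' you invoke is essentially the paper's Lemma~\ref{bdlmm} specialized to $r=0$, $s=1$; the paper works with the tilted thickening $K^\ep$ rather than asserting exact equality because conditioning a product density on the measure-zero set $K$ has to be reconciled with the thin-shell definition of uniform measure, which the paper does via Section~2 and Proposition~\ref{imp}. So the starting point is sound, if under-justified.

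There are two genuine gaps. First, the asymptotic $g_n(n,nb)\asymp n\,e^{-\sqrt{(b-2)n}}\,g^\ast_{n-1}$ is stated, not proved, and the ``ruling out alternatives'' step you flag is precisely where all the work of part~(c) lives. The paper's proof of~(c) is, in effect, the missing argument: the events $A$, $B$, $D$ (concentration of the truncated second moment, few large coordinates, bounded total mass of large coordinates) rule out two-spike and diffuse deviations, while the explicit event $E'$ and the two-dimensional local limit theorem (Lemma~\ref{local}) give the matching lower bound. Naming the estimate is not the same as establishing it.

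Second, your deduction of (a) and (d) from (c) by conditioning on the argmax and reducing to an $(n-1)$-dimensional copy of $K$ with $b''\to 2$ is a genuinely different route from the paper's, but it runs into a problem you do not address: you cannot control the sign of $b''-2$. From the fluctuation bound $|M-\sqrt{(b-2)n}|=O(n^{(2-a)/4})$ one gets $b''-2=O(n^{-a/4})$ of either sign. When $b''<2$ you would need a version of Theorem~\ref{thm1} uniform over $b$ approaching $2$ (you acknowledge this), but when $b''>2$ Theorem~\ref{thm1} does not apply at all and you would need a uniform version of Theorem~\ref{thm2} itself, which is circular (or at least requires a delicate induction you do not describe). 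The paper sidesteps all of this by proving (a), (c), (d) directly in the i.i.d.\ picture: $M^Y_2$ is bounded alongside $M^Y$ under $A^c\cap B^c\cap D^c$, and (a) follows by the same Hoeffding-plus-thickening argument used for Theorem~\ref{thm1}(a), with the only change being the denominator bound~\eqref{low22}, which absorbs the exponential cost $e^{-\sqrt{(b-2)n}}$ of forcing the spike into existence.
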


The final theorem in this note provides error bounds for the distributional convergence results in Theorems \ref{thm1} and \ref{thm2}. 
The bounds may not be sharp. 
\begin{thm}\label{thm3}
In the setting of Theorem \ref{thm1}, 
\[
\sup_{t_1,\ldots,t_k} \bigl|\pp(X_1\le t_1,\ldots,X_k \le t_k) - \pp(Z_1\le t_1,\ldots, Z_k\le t_k)\bigr|\le Ck\sqrt{\frac{\log n}{n}},
\]
where $C$ is a constant that depends only on  $b$. 
In Theorem \ref{thm2}, the bound on the right hand side becomes $Ckn^{-1/4}$. 
\end{thm}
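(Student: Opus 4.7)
The plan is to realize the uniform distribution on $K$ as an i.i.d.\ product conditioned on two scalar constraints, and then to convert the resulting conditional probability into an unconditional one by means of a two-dimensional local central limit theorem.

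\emph{Setting of Theorem \ref{thm1} ($1<b\le 2$).} Let $p(x)=c\exp(-rx^2-sx)\mathbf{1}_{x>0}$ with $r,s$ as in Theorem \ref{thm1}, and let $Z_1,\ldots,Z_n$ be i.i.d.\ with density $p$. Writing $S_m=\sum_{i=1}^m Z_i$ and $Q_m=\sum_{i=1}^m Z_i^2$, the product density is constant on $K$, so $(X_1,\ldots,X_n) \stackrel{d}{=} (Z_1,\ldots,Z_n)\,|\,S_n=n,\ Q_n=nb$. Denoting by $f_m$ the joint density of $(S_m,Q_m)$, I would write
\[
\pp(X_1\le t_1,\ldots,X_k\le t_k) = \ee\Bigl[\mathbf{1}_{\{Z_1\le t_1,\ldots,Z_k\le t_k\}}\cdot \frac{f_{n-k}(n-S_k,\,nb-Q_k)}{f_n(n,nb)}\Bigr]
\]
and apply a two-dimensional local CLT (valid because $p$ has Gaussian tails when $b<2$ and exponential tails when $b=2$, and because the covariance $\Sigma$ of $(Z_1,Z_1^2)$ is nondegenerate) to get
\[
\frac{f_{n-k}(n-S_k,nb-Q_k)}{f_n(n,nb)} = \frac{n}{n-k}\exp\Bigl(-\tfrac{1}{2(n-k)}\langle v_k,\Sigma^{-1}v_k\rangle\Bigr)\bigl(1+O(n^{-1/2})\bigr)
\]
uniformly in $v_k:=(k-S_k,\,kb-Q_k)$ on a moderate-deviation region. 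A tail bound $\pp(\max_{i\le k}Z_i>C\sqrt{\log n})=o(n^{-1})$ (and $C\log n$ for $b=2$) restricts attention to the event $\|v_k\|=O(\sqrt{k\log n})$, on which the ratio deviates from $1$ by $O(k\sqrt{\log n/n})$. Integrating and subtracting $\pp(Z_1\le t_1,\ldots,Z_k\le t_k)$ gives the claimed Kolmogorov bound, uniformly in $(t_i)$.

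\emph{Setting of Theorem \ref{thm2} ($b>2$).} The same representation holds with $Z_i\sim Exp(1)$ (since $e^{-\sum z_i}$ is constant on $K$), but now $Q_n=nb$ sits in a large-deviation regime where the tilted family with $r<0$ fails to normalize on $[0,\infty)$. I would instead use the localization of Theorem \ref{thm2}: by exchangeability, condition on $n$ being the argmax and on its value $M$. Then $(X_1,\ldots,X_{n-1})$ is uniform on
\[
K_M := \{y\in\rr_+^{n-1}:\ \sum y_i=n-M,\ \sum y_i^2=nb-M^2,\ y_i\le M\}.
\]
After the rescaling $Y_i=(n-1)X_i/(n-M)$, this is a $K$-type set for parameters $(n-1,b')$ with $b'=(n-1)(nb-M^2)/(n-M)^2$. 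On the high-probability event $M^2=(b-2)n+O(\sqrt n)$ (the CLT-scale refinement of Theorem \ref{thm2}(c)) one verifies $b'=2+O(n^{-1/2})$, so the matching tilted density on $[0,M]$ is an $O(n^{-1/2})$ perturbation of $Exp(1)$; running the local-CLT argument from the previous paragraph in this perturbed setting gives an $Exp(1)$ limit with a conditional Kolmogorov error of $O(\sqrt{\log n/n})$. Integrating over the $O(\sqrt n)$-scale fluctuations of $M^2$, which feed into the limiting density through $b'$, contributes an additional $O(n^{-1/4})$; this term dominates and yields the stated bound $Ckn^{-1/4}$.

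\emph{Main obstacle.} The technically demanding ingredient is the two-dimensional local CLT with a quantitative relative error that is uniform over the moderate-deviation region needed here, combined with the truncation that transfers $L^\infty$ density estimates into a Kolmogorov bound. The conceptually novel difficulty is the $b>2$ regime: one must justify the reduction by conditioning on the argmax, establish that the residual problem is genuinely a small-perturbation $b\approx 2$ problem despite the hard boundary $y_i\le M$, and quantitatively track the fluctuations of $M$ to extract the $n^{-1/4}$ rate.
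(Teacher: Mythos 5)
Your proposal takes a genuinely different route from the paper in both regimes, and in the $b>2$ regime it has gaps.

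For $1<b\le 2$, the paper does not use an explicit conditional-density representation. It fixes $\ep=n^{-10}$, approximates the indicator $1_{\{x\le x_0\}}$ by a piecewise-linear $h_n$ with Lipschitz constant $n^5$, feeds $g(x)=\frac1n\sum_i h_n(x_i)$ into Proposition~\ref{propbd2} (which compares $X$ to $X^\ep$ to $Y\sim G_{r,s}^{\otimes n}$ conditioned on $K^\ep$), and closes the loop with Hoeffding's inequality together with the sup-norm local CLT of Lemmas~\ref{bdddens}--\ref{local} applied at a single point to lower-bound $\pp(m(Y)>C(b)\ep,\,Y\in K^\ep)$. Your density-ratio plus moderate-deviation local CLT scheme is a legitimate and fairly standard alternative, and if that refined local CLT is granted it would plausibly give a slightly sharper rate; the trade-off is that the paper only needs an $L^\infty$ density estimate at one point, while you need a relative-error estimate uniform over $\|v_k\|\le C\sqrt{k\log n}$, which is a stronger input. (A minor quibble: on that region the Gaussian factor deviates from $1$ by $O(k\log n/n)$ and the LLT relative error is $O(n^{-1/2})$, so your stated ``$O(k\sqrt{\log n/n})$'' is an overcount, though it only strengthens the conclusion.)

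For $b>2$ the discrepancy is more consequential. The paper does not condition on the argmax at all: it runs the identical Lipschitz-approximation argument with the $Exp(1)$ base measure $Y\sim G_{0,1}^{\otimes n}$ and simply accepts that $Y\in K^\ep$ is now a large-deviation event, absorbed into the denominator of Proposition~\ref{propbd2}. The key input is \eqref{low22} (proved inside Theorem~\ref{thm2}(c)--(d)), which lower-bounds $\pp(m(Y)>C(b)\ep,\,Y\in K^\ep)$ by $C^{-1}n^{-60}e^{-q\sqrt n}$ with $q=\sqrt{b-2}$; dividing the Hoeffding tail $2e^{-nt^2/2}$ by this yields $Ce^{C\sqrt n}e^{-n(t-\cdot)^2/2}$, which becomes small only for $t$ of order at least $n^{-1/4}$, and that is the entire origin of the $n^{-1/4}$ rate. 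Your argmax-conditioning scheme is an interesting alternative idea, but as written it has three concrete gaps. First, it needs a CLT-scale control $M^2=(b-2)n+O_P(\sqrt n)$; what the paper proves is only $|M-q\sqrt n|\le Cn^{(2-a)/4}$ with $a=1/100$, i.e.\ $|M^2-(b-2)n|=o(n)$ but far above $\sqrt n$ scale, and tightening to $\sqrt n$ would apparently require the $Exp(1)$ fluctuation structure of the residual coordinates, which is what you are trying to prove. Second, the step ``integrating over the fluctuations of $M$ contributes an additional $O(n^{-1/4})$'' is not justified: if the conditional Kolmogorov error given $M$ were $O(n^{-1/2})$ (because $b'-2=O(n^{-1/2})$), a mixture over $M$ is still $O(n^{-1/2})$, so it is unclear where $n^{-1/4}$ would enter. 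Third, the hard cap $y_i\le M$ in $K_M$ is flagged but not actually handled. None of these is obviously fatal, but each requires substantial additional work, all of which the paper's route sidesteps.
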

If the condition $\sum x_i^2 = nb$ is dropped from the definition of $K$, the result is a scaled version of the standard $(n-1)$-simplex in $\rr^n$. It is a classical result in probability that the coordinates of a point chosen uniformly from this body behave like independent standard Exponential random variables in the large $n$ limit. 

On the other hand, if the condition $\sum x_i = n$ is dropped, then $K$ is just a sphere of radius $\sqrt{nb}$. Drawing uniformly from the surface of a sphere results in a vector with approximately independent Gaussian coordinates. 

A unified treatment of results of the above type was done by Diaconis and Freedman \cite{diaconisfreedman87}, which is the basic reference for the literature in this area till~1987. 

In recent times, attention has shifted to the study of the $\ell_p^n$ balls and spheres, that is, sets where $\sum|x_i|^p$ is bounded by or equal to a constant. The distribution of low dimensional projections for $\ell_p^n$ balls was obtained by Naor and Romik \cite{naorromik03}, who showed that the coordinates behave like i.i.d.\ random variables with density proportional to $e^{-|x|^p}$. An extensive investigation of the probabilistic structure of $\ell_p^n$ balls was done by Barthe et.\ al.\ \cite{bartheetal05}.

The volumes of intersections of $\ell_p^n$ balls (not spheres) have been previously investigated, in response to a question raised by Vitali Milman, in a series of papers by Schechtman and Zinn \cite{schechtmanzinn90}, Schechtman and Schmuckenschl\"ager \cite{ss91} and Schmuckenschl\"ager \cite{schmuckenschlager98, schmuckenschlager01}. They do not, however, study the behavior of uniformly chosen random points from these sets.



The main motivation for this paper, however, comes from certain observations in the physics literature. The phase transition that is rigorously established by Theorems \ref{thm1} and \ref{thm2} was previously identified in a physics paper of Rumpf \cite{rumpf04}. Rumpf's investigation was motivated by a desire to understand localization of energy in discrete nonlinear Schr\"odinger equations. A rigorous mathematical program of investigating the localization in nonlinear Schr\"odinger equations, based on the techniques developed in this manuscript, has been developed in \cite{cha12, ck10}.  In a different direction, applications of these techniques to localization random geometric graphs have been worked out in \cite{chaharel}.

\section{Preliminaries}\label{prelim}

For a vector $x = (x_1,\ldots,x_n)\in \rr^n$, define
\[
\mu(x) := \frac{1}{n}\sum_{i=1}^n x_i, \ \ \ \mu_2(x) := \frac{1}{n}\sum_{i=1}^n x_i^2 = \frac{\|x\|^2}{n}.
\]
Also define
\[
\sigma(x) := \sqrt{\mu_2(x)-\mu^2(x)}, \ \ \ m(x):= \min_{1\le i\le n} x_i.
\]
Fix $b > 1$ as in Section \ref{intro} and let $b' = \sqrt{b-1}$. Let $\rr_+$ be the set of positive real numbers. Note that according to the definition \eqref{kdef}, 
\begin{equation*}
\begin{split}
K &= \{x\in \rr_+^n : \mu(x)=1, \ \mu_2(x)=b\}\\
&= \{x\in \rr_+^n : \mu(x)=1, \ \sigma(x)=b'\}. 
\end{split}
\end{equation*}
The notation introduced above will be used without explicit reference in the rest of the manuscript. 


Recall that the uniform distribution on the unit sphere in any dimension is equivalently defined as the unique probability measure that is invariant under rotations (i.e.\ the action of orthogonal matrices). 
For each $a, d\in \rr$, $c >0$, define
\begin{align*}
&S(a,c) := \{x\in \rr^n: \mu(x)=a, \ \sigma(x)=c\}, \\ &S(a,c,d) := \{x\in S(a,c): m(x) > d\}. 
\end{align*}
Note that $S(0,1)$ is a sphere in the $n-1$ dimensional hyperplane $\{x: \mu(x)=0\}$, centered at the origin. This hyperplane can be obtained as the image of $\rr^{n-1} = \{x\in \rr^n: x_n =0\}$ under any rotation in $\rr^n$ that takes the point $(0,0,\ldots,0,\sqrt{n})$ to $(1,1,\ldots,1) =: {\bf 1}$. 

At the risk of coming across as too pedantic, I will now define the uniform probability distribution on $K$. Consider the map $\phi:\rr^n \ra S(0,1)$ defined as $\phi(x) := 0$ if $x = \alpha \bf{1}$ for some scalar $\alpha$, and 
\begin{equation}\label{phidef}
\phi(x) := \frac{1}{\sigma(x)}(x-\mu(x){\bf1}) \ \ \text{otherwise}. 
\end{equation}
Let $Z$ be an $n$-dimensional standard Gaussian random vector. Let $A$ be an orthogonal matrix satisfying $A{\bf1}=\bf{1}$. (This is the set of all rotations preserving $S(0,1)$.) Then $AZ$ is again standard Gaussian. Note that 
\[
\mu(AZ) = \frac{1}{n}{\bf1}^T AZ = \frac{1}{n}{\bf1}^TZ = \mu(Z).
\]
Since $\|AZ\|=\|Z\|$, this implies that $\sigma(AZ) = \sigma(Z)$. Since $\sigma(Z) >0$ almost surely, the above steps can be combined to give 
\begin{equation}\label{az}
A\phi(Z) = \frac{1}{\sigma(AZ)} (AZ - \mu(AZ)) = \phi(AZ) \stackrel{d}{=} \phi(Z).  
\end{equation}
Since this holds for every rotation $A$ of the sphere $S(0,1)$, $\phi(Z)$ is uniformly distributed on $S(0,1)$.

Now suppose $S(0,1,d) \ne \emptyset$. Then there exists $x\in S(0,1)$ such that $m(x) > d$. Since $m(x) = m(\phi(x))$  for this $x$ and $m\circ \phi$ is a continuous map in a neighborhood of $S(0,1)$, there exists a ball $B$ of positive radius centered at $x$ such that $m(\phi(y)) > d$ for all $y\in B$. Since $\pp(Z\in B) > 0$ this shows that $\pp(m(\phi(Z)) > d) > 0$, and hence the uniform distribution on $S(0,1)$ puts positive mass on $S(0,1,d)$. Therefore the uniform distribution on $S(0,1,d)$ is simply the restriction of the uniform distribution on $S(0,1)$ to this set. Since\begin{equation}\label{ks}
K = b'S(0,1, -1/b') + {\bf1},
\end{equation}
this gives an alternative characterization of the uniform distribution on $K$ that will be convenient for our purposes. 

\section{From thin sets to thick sets}
In this section, we show how to deduce results about $K$ from a slight `positively tilted' thickening of $K$, that we call $K^\ep$. 
For any $\epsilon >0$, let
\[
K^\epsilon := \{x\in \rr_+^n : \epsilon< \mu(x)-1<2\epsilon, \ \epsilon < \mu_2(x)-b < b\epsilon\}. 
\]
Clearly, $K^\ep$ has nonzero volume whenever it is non-empty, and therefore the uniform distribution on $K^\epsilon$ is naturally defined as restriction of the Lebesgue measure, normalized to have mass $1$. 

Define a map $\psi :\rr^n \ra \rr^n$ as 
\begin{equation}\label{psidef}
\psi(x) = b'\phi(x) + {\bf 1}, 
\end{equation}
where $\phi$ is the map defined in \eqref{phidef} and $b' = \sqrt{b-1}$. 
\begin{prop}\label{imp}
Let $K$, $K^\epsilon$ and $\psi$ be defined as above, and suppose $K^\epsilon$ is non-empty. Let $X$ be a random vector that is uniformly distributed on $K$, and let $X^\epsilon$ be a random vector that is uniformly distributed on $K^\epsilon$. Then for any $f:\rr_+^n \ra [0,\infty) $ and any $\epsilon\in (0,c(b))$, 
\[
\ee f(X)\le \frac{\ee f(\psi(X^\epsilon))}{\pp(m(X^\epsilon) > C(b)\epsilon)},
\]
where $c(b) (< 1/2)$ and $C(b)$ are positive constants that depend only on the value of~$b$. The right hand side is interpreted as infinity if the denominator is zero. 
\end{prop}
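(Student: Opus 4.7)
The plan is to set up an explicit parametrization of the subset of $K^\ep$ on which $\psi$ maps into $K$, and then use a change of variables to show that $\psi$ pushes the uniform distribution on $K^\ep$ forward to the uniform distribution on $K$. Define
\[
\Psi(y,a,c) := (c/b')(y - {\bf 1}) + a{\bf 1}, \qquad T := \{(a,c)\in \rr_+^2 : a \in (1+\ep, 1+2\ep),\ a^2 + c^2 \in (b+\ep, b+b\ep)\}.
\]
A direct computation gives $\mu(\Psi(y,a,c)) = a$, $\sigma(\Psi(y,a,c)) = c$, and $\psi(\Psi(y,a,c)) = y$, so $\Psi$ is a right inverse of $\psi$. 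The first step is to verify that $\Psi$ maps $K \times T$ bijectively onto $K^\ep_0 := \{x \in K^\ep : \psi(x) \in K\}$, with inverse $x \mapsto (\psi(x),\mu(x),\sigma(x))$; everything is immediate except checking that $\Psi(y,a,c) \in \rr_+^n$ for $y \in K$ and $(a,c) \in T$, which reduces to $y_i \ge 1 - ab'/c$ coordinatewise and follows from the inequality $a > c/b'$ on $T$ established in the Taylor estimate below.

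Next I compute the Jacobian of $\Psi$. The tangent space $T_y K$ equals $\{v \in \rr^n : v \perp {\bf 1},\ v \perp (y-{\bf 1})\}$, while the two extra coordinate directions are ${\bf 1}$ (for $a$) and $(y-{\bf 1})/b'$ (for $c$); these three bundles of tangent directions are mutually orthogonal. A vector $v \in T_y K$ pushes forward to $(c/b')v$, contributing $(c/b')^{n-2}$ to the Jacobian, and $|{\bf 1}| = \sqrt n$ together with $|y-{\bf 1}|/b' = \sqrt n$ (using $\sigma(y)=b'$) each contribute $\sqrt n$, for a total Jacobian of $n(c/b')^{n-2}$. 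The essential feature driving the entire argument is that this Jacobian depends only on $c$, and not at all on $y$.

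Applying the change of variables gives, for any $f:\rr_+^n \to [0,\infty)$ extended by zero outside $K$,
\[
\int_{K^\ep} f(\psi(x))\,dx = \int_{K^\ep_0} f(\psi(x))\,dx = W \int_K f\, d\mathcal{H}^{n-2},
\]
where $W := \int_T n(c/b')^{n-2}\, da\, dc$ is a constant independent of $y$. Setting $f \equiv 1$ on $K$ identifies $W\cdot\vol_{n-2}(K) = \vol(K^\ep_0)$, and dividing both sides by $\vol(K^\ep)$ converts the identity into
\[
\ee f(\psi(X^\ep)) = \pp(\psi(X^\ep) \in K)\cdot \ee f(X);
\]
i.e., conditional on $\psi(X^\ep) \in K$, the vector $\psi(X^\ep)$ is exactly uniform on $K$. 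Rearranged, $\ee f(X) = \ee f(\psi(X^\ep))/\pp(\psi(X^\ep) \in K)$.

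It remains to choose $c(b) < 1/2$ and $C(b)>0$ so that $\{m(X^\ep) > C(b)\ep\} \subset \{\psi(X^\ep) \in K\}$ for all $\ep \in (0,c(b))$; replacing the denominator by the smaller probability $\pp(m(X^\ep) > C(b)\ep)$ then gives the stated inequality. Since $\psi(x)_i = (b'/\sigma(x))(x_i - \mu(x)) + 1$, the condition $\psi(x) \ge 0$ coordinatewise is equivalent to $m(x) \ge \mu(x) - \sigma(x)/b'$, so the containment reduces to showing $0 < \mu(x) - \sigma(x)/b' \le C(b)\ep$ uniformly on $K^\ep$ for small $\ep$. Writing $\sigma(x)^2 - b'^2 = (\mu_2(x) - b) - (\mu(x)-1)(\mu(x)+1)$ and expanding $\sigma(x)/b' = 1 + O(\ep)$ by Taylor's theorem gives $\mu(x) - \sigma(x)/b' = \ep\cdot b/(2(b-1)) + O(\ep^2)$ on $T$, with strictly positive leading coefficient, from which both bounds follow. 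The principal conceptual obstacle is setting up the parametrization and computing the Jacobian correctly; once that is in place, only routine bookkeeping remains.
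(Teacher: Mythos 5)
Your proof is correct and takes a genuinely different route from the paper's. The crucial fact in both cases is that, conditional on $\psi(X^\epsilon)\in K$, the vector $\psi(X^\epsilon)$ is exactly uniform on $K$. The paper establishes this indirectly via rotational symmetry: since $\mu$ and $\sigma$, and hence $\phi$, commute with the orthogonal group fixing ${\bf 1}$, and since the unrestricted thickened shell $\hat S^\epsilon$ is invariant under that group, $\phi(Y^\epsilon)$ must be uniform on $S(0,1)$; conditioning then lands on the desired uniformity. You instead exhibit the explicit right inverse $\Psi(y,a,c)=(c/b')(y-{\bf 1})+a{\bf 1}$ of $\psi$, show it is a diffeomorphism from $K\times T$ onto $K^\epsilon_0:=\{x\in K^\epsilon:\psi(x)\in K\}$, and compute the Jacobian $n(c/b')^{n-2}$ directly; the key feature --- that this determinant is independent of $y$ --- is precisely the coordinate-level manifestation of the symmetry the paper exploits, and it delivers the same conclusion by Fubini. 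Your route is more computational but more concrete, and makes transparent exactly where the conditional uniformity comes from; the paper's is coordinate-free and shorter. Both arguments conclude with the same containment $\{m(X^\epsilon)>C(b)\epsilon\}\subset\{\psi(X^\epsilon)\in K\}$, which rests on the estimates the paper records as \eqref{sigmaup}--\eqref{sigmadown}. Two small presentation issues, neither of which damages the argument: (i) the displayed identity $\ee f(\psi(X^\epsilon))=\pp(\psi(X^\epsilon)\in K)\,\ee f(X)$ is only correct if $f$ vanishes off $K$; for a general nonnegative $f$ you should write $\ee\bigl[f(\psi(X^\epsilon))\,1_{\{\psi(X^\epsilon)\in K\}}\bigr]=\pp(\psi(X^\epsilon)\in K)\,\ee f(X)$ and then drop the indicator by monotonicity to get the inequality; (ii) the Taylor statement ``$\mu(x)-\sigma(x)/b'=\epsilon b/(2(b-1))+O(\epsilon^2)$'' suggests a single leading coefficient, whereas $\mu(x)$ and $\mu_2(x)$ each range over intervals of width comparable to $\epsilon$, so what you actually have (and what suffices) is a two-sided bound of the form $c_1(b)\epsilon\le\mu(x)-\sigma(x)/b'\le C(b)\epsilon$ on $K^\epsilon$ for small $\epsilon$.
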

\begin{proof}
Let $\ep$ be so small that $\ep < 1/2$ and $7\ep <  b-1$. Define
\begin{align*}
&\hat{K}^\epsilon := \{x\in \rr^n: \epsilon < \mu(x) - 1< 2\epsilon, \ \epsilon < \mu_2(x)-b< b\epsilon\},
\end{align*}
so that $K^\epsilon = \{x\in \hat{K}^\epsilon: m(x) >0\}$.  
Note that
\begin{equation}\label{sigmaup}
\begin{split}
\sigma(x) &= (\mu_2(x)-\mu^2(x))^{1/2}\\
&< (b+b\ep - (1+\ep)^2)^{1/2}\\
&\le ((b-1)(1+\ep))^{1/2} \le b'(1+\ep),
\end{split} 
\end{equation}
and
\begin{equation}\label{sigmadown}
\begin{split}
\sigma(x) &> (b+\ep - (1+2\ep)^2)^{1/2}\\
&= (b- 1-3\ep-4\ep^2)^{1/2}\\
&> ((b-1)(1-7\ep/(b-1)))^{1/2} > b'(1-7\ep/(b-1)). 
\end{split} 
\end{equation}
The last inequality shows that, in particular, $\sigma(x) >0$ and hence $x$ cannot belong to the diagonal line. 
Let $l$ be the linear transformation
\[
l(x) := b'x + {\bf 1}. 
\]
Let $d := 1/b'$, so that $l^{-1}(x) = d(x-{\bf 1})$. As pointed out before in \eqref{ks}, $S(0,1,-d) = l^{-1}(K)$. Define 
\begin{align*}
&\hat{S}^\epsilon := l^{-1}(\hat{K}^\epsilon), \ \  S^\epsilon := \{x\in \hat{S}^\epsilon: m(x) > -d\} = l^{-1}(K^\ep).
\end{align*}
Let $Y^\epsilon$ be uniformly distributed on $\hat{S}^\ep$. Since $\hat{K}^\ep$ does not intersect the diagonal line, it follows that  $\hat{S}^\ep$ does not intersect the diagonal line either. We claim that $\phi(Y^\epsilon)$ is uniformly distributed on $S(0,1)$. To see this, let $A$ be an orthogonal matrix such that $A{\bf 1} = {\bf1}$. As argued to derive \eqref{az}, we see that
\begin{equation}\label{ay}
A\phi(Y^\epsilon) = \phi(AY^\epsilon)\ \ \text{a.s.} 
\end{equation}
Again, as argued before, $\mu(Ax) = \mu(x)$ and $\mu_2(Ax)=\mu_2(x)$ for any $x$ outside the diagonal line, and therefore, $A$ maps $\hat{K}^\ep$ onto itself. By the property that $A{\bf 1} = {\bf 1}$ it follows that $A$ and $l^{-1}$ commute, and thus $A$ maps $\hat{S}^\ep$ onto itself. Since $A$ is a linear map, this shows that $AY^\epsilon$ is uniformly distributed on $\hat{S}^\ep$. Combined with \eqref{ay}, this proves the claim that $\phi(Y^\epsilon)$ is uniformly distributed on~$S(0,1)$. 

Now, clearly, 
\[
m(\phi(Y^\epsilon)) = \frac{1}{\sigma(Y^\epsilon)} (m(Y^\epsilon) - \mu(Y^\epsilon)). 
\]
Thus, $m(\phi(Y^\epsilon)) > - d$ if and only if 
\[
m(Y^\epsilon) > \mu(Y^\epsilon) - d\sigma(Y^\epsilon). 
\]
Therefore, if $U$ is distributed uniformly on $S(0,1,-d)$, then for any measurable $h:\rr^n\ra [0,\infty)$,
\begin{equation}\label{hu}
\begin{split}
\ee h(U) &= \ee(h(\phi(Y^\epsilon)) \mid m(\phi(Y^\epsilon)) > -d) \\
&= \ee(h(\phi(Y^\epsilon)) \mid m(Y^\epsilon) > \mu(Y^\epsilon) - d\sigma(Y^\epsilon)). 
\end{split}
\end{equation}
Now take any $y\in \hat{S}^\ep$ and let $x = l(y)$. Then $x\in \hat{K}^\ep$, and therefore by \eqref{sigmaup},
\begin{align*}
\mu(y) - d\sigma(y) &= d(\mu(x) - 1) - d^2\sigma(x)\\
&> d\epsilon - d(1+\ep) = -d.
\end{align*}
Thus, the event $m(Y^\epsilon) > \mu(Y^\epsilon) - d\sigma(Y^\epsilon)$ implies $m(Y^\epsilon) > -d$. Let $Z^\epsilon$ be uniformly distributed on $S^\epsilon$. Then the law of $Z^\epsilon$ is the same as that of $Y^\epsilon$ conditioned on the event $m(Y^\epsilon) > -d$. Combined with the previous step and \eqref{hu}, we get 
\begin{align*}
\ee h(U) &=\ee(h(\phi(Y^\epsilon)) \mid m(Y^\epsilon) > \mu(Y^\epsilon) - d\sigma(Y^\epsilon)) \\
&= \ee(h(\phi(Y^\epsilon)) \mid m(Y^\epsilon) > \mu(Y^\epsilon) - d\sigma(Y^\epsilon), \ m(Y^\epsilon) > -d)\\
&= \ee(h(\phi(Z^\epsilon)) \mid m(Z^\epsilon) > \mu(Z^\epsilon) - d\sigma(Z^\epsilon)).
\end{align*}
Since $h$ is a non-negative function, this implies that 
\[
\ee h(U) \le 
\frac{\ee h(\phi(Z^\epsilon))}{\pp(m(Z^\epsilon) > \mu(Z^\epsilon) - d\sigma(Z^\epsilon))}. 
\]
(If the denominator is zero we interpret the right hand side as infinity.)
However, for any $y = l(x)\in \hat{S}^\epsilon$,  \eqref{sigmadown} gives 
\begin{align*}
\mu(y) - d\sigma(y) &= d(\mu(x)-1) - d^2\sigma(x)\\
&\le 2d\epsilon - d(1-7d^2\epsilon)= -d + (2d+7d^3)\epsilon. 
\end{align*}
Thus, 
\begin{equation}\label{bd1}
\ee h(U) \le \frac{\ee h(\phi(Z^\epsilon))}{\pp(m(Z^\epsilon) > -d + (2d+7d^3)\epsilon)}. 
\end{equation}
Since $X$ has the same law as $l(U)$, we get
\begin{align*}
\ee f(X) &= \ee(f\circ l (U))
\le \frac{\ee(f\circ l\circ \phi(Z^\epsilon))}{\pp(m(Z^\epsilon) > -d + (2d+7d^3)\epsilon)}.
\end{align*}
Since $\psi = l\circ \phi$ and $\phi = \phi\circ l$, this gives 
\begin{align*}
\ee f(X) &\le \frac{\ee(f\circ \psi(l(Z^\epsilon)))}{\pp(m(Z^\epsilon) > -d + (2d + 7d^3)\epsilon)}.
\end{align*}
Again, since $l$ is a linear bijection between $S^\ep$ and~$K^\epsilon$, $l(Z^\epsilon)$ is uniformly distributed on $K^\epsilon$.  Thus, 
\[
\ee(f\circ \psi(l(Z^\epsilon)) ) = \ee(f\circ \psi(X^\epsilon)).
\]
Finally, note that $m(l(Z^\epsilon)) = d^{-1}m(Z^\epsilon) + 1$, and hence
\[
\pp(m(Z^\epsilon) > -d + (2d+7d^3)\epsilon) = \pp(m(X^\epsilon) > (2+7d^2)\epsilon).
\]
This completes the proof. 
\end{proof}

\begin{prop}\label{propbd1}
Suppose $K^\ep$ is non-empty. Let $c(b)$ and $C(b)$ be as in Proposition \ref{imp}, and suppose $\ep\in (0,c(b))$.  Suppose $g:\rr^n\ra \rr$ is a function such that there is a constant $L$, such that for all $x,y\in \rr^n$,
\[
|g(x)-g(y)|\le L\max_{1\le i\le n} |x_i - y_i|. 
\]
Then for any $a,t\in \rr$, 
\begin{align*}
\pp(|g(X)-a| > t) \le  \frac{\pp(|g(X^\epsilon)-a| > t- C_3(b)L\epsilon n)}{\pp(m(X^\epsilon) > C(b)\epsilon)},
\end{align*}
where $C_3(b)$ is another constant depending only on $b$.
\end{prop}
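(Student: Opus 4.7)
The plan is to apply Proposition \ref{imp} with $f$ equal to the indicator of $\{|g-a|>t\}$, and then use the Lipschitz hypothesis to replace $g(\psi(X^\ep))$ with $g(X^\ep)$ at the cost of an additive error of order $L\ep n$. Concretely, since $f := \mathbf{1}_{\{|g-a|>t\}}$ is nonnegative, Proposition \ref{imp} yields
\[
\pp(|g(X)-a|>t) \;\le\; \frac{\pp(|g(\psi(X^\ep))-a|>t)}{\pp(m(X^\ep)>C(b)\ep)}.
\]
By the Lipschitz hypothesis, $|g(\psi(X^\ep))-g(X^\ep)| \le L\,\max_i|\psi(X^\ep)_i - X^\ep_i|$, so whenever this max is at most $C_3(b)\ep n$ almost surely on $K^\ep$, the event $\{|g(\psi(X^\ep))-a|>t\}$ is contained in $\{|g(X^\ep)-a|>t-C_3(b)L\ep n\}$, which gives the desired bound after taking probabilities.

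The real content of the proof is therefore a deterministic estimate showing that for every $x\in K^\ep$,
\[
\max_{1\le i\le n} |\psi(x)_i - x_i| \;\le\; C_3(b)\,\ep n.
\]
To see this, write
\[
\psi(x)_i - x_i \;=\; \frac{b'-\sigma(x)}{\sigma(x)}\, x_i \;+\; \frac{\sigma(x) - b'\mu(x)}{\sigma(x)}.
\]
Inequalities \eqref{sigmaup} and \eqref{sigmadown} already established in the proof of Proposition \ref{imp} give $|\sigma(x)-b'|\le C_1(b)\ep$ and $\sigma(x)\ge b'/2$ for $\ep$ smaller than some $c(b)$. Combined with $|\mu(x)-1|\le 2\ep$ and the trivial bound $0\le x_i \le n\mu(x)\le 2n$ valid on $K^\ep \subset \rr_+^n$, each of the two terms above is bounded by a constant depending only on $b$ times $\ep n$, yielding the claimed deterministic estimate.

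Putting the two pieces together then gives
\[
\pp(|g(X)-a|>t) \;\le\; \frac{\pp(|g(X^\ep)-a|>t - C_3(b)L\ep n)}{\pp(m(X^\ep)>C(b)\ep)},
\]
which is exactly the statement. The only place any real calculation is needed is in the deterministic bound on $\max_i|\psi(x)_i-x_i|$; this is the main (though mild) obstacle, and it is precisely the reason the additive error on the right hand side carries the factor $n$ rather than being of order $\ep$ alone — the coordinates of $x\in K^\ep$ can be as large as order $n$, so multiplying by the $O(\ep)$ relative error between $\sigma(x)$ and $b'$ produces an $O(\ep n)$ absolute error in each coordinate of $\psi(x)-x$.
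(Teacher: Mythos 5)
Your proposal follows exactly the paper's argument: apply Proposition \ref{imp} to the indicator $f=\mathbf{1}_{\{|g-a|>t\}}$, and reduce the problem to the deterministic estimate $\max_i|\psi(x)_i-x_i|\le C_3(b)\ep n$ on $K^\ep$, which you obtain via the same algebraic decomposition $\psi(x)_i-x_i=\frac{b'-\sigma(x)}{\sigma(x)}x_i+\frac{\sigma(x)-b'\mu(x)}{\sigma(x)}$ together with \eqref{sigmaup}, \eqref{sigmadown}, $|\mu(x)-1|\le 2\ep$, and $x_i\le 2n$. This is the paper's proof, correctly reproduced.
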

\begin{proof}
Take any $x\in K^\epsilon$ and let $y = \psi(x)$. Then for any $i$, we can use the definition of $K^\ep$ and the inequalities \eqref{sigmaup} and \eqref{sigmadown} to conclude that
\begin{align*}
|x_i - y_i| &= \biggl|\biggl(1-\frac{b'}{\sigma(x)}\biggr)x_i + \frac{b'\mu(x)}{\sigma(x)} - 1\biggr|\\
&\le \frac{|(b'-\sigma(x))x_i|}{\sigma(x)} + \frac{b'|\mu(x)-1| + |b'-\sigma(x)|}{\sigma(x)}\\
&\le C_2(b)\ep (1+x_i), 
\end{align*}
where $C_2(b)$ is a constant depending only on $b$. 
Since $c(b) < 1/2$, we have   
\[
x_i\le\sum_j x_j \le n(1+2\epsilon)\le 2n.
\]
Thus, taking $C_3(b) = 3C_2(b)$, we have 
\[
\max_i |x_i-y_i|\le C_2(b)\epsilon (2n+1)\le C_3(b)\epsilon n. 
\]
Therefore for any $x\in K^\epsilon$,
\begin{align*}
|g(x)-g(\psi(x))| &\le C_3(b)L\epsilon n.  
\end{align*}
In particular, the event $|g(\psi(x)) - a| > t$ implies 
\[
|g(x)-a| > t - C_3(b)L\epsilon n. 
\]
Taking $f(x) := 1_{\{|g(x)-a|>t\}}$, we get by Proposition \ref{imp} that
\begin{align*}
\pp(|g(X)-a| > t) &\le \frac{\pp(|g(\psi(X^\epsilon))-a| > t)}{\pp(m(X^\epsilon) > C(b)\epsilon)}\\
&\le \frac{\pp(|g(X^\epsilon)-a| > t- C_3(b)L\epsilon n)}{\pp(m(X^\epsilon) > C(b)\epsilon)}. 
\end{align*}
This completes the proof.
\end{proof}

\section{From thick sets to conditional distributions}\label{thickcond}
In this section, we show that the uniform distribution on $K^\ep$ be can approximated by the distribution of a random vector with independent coordinates conditioned to be in $K^\ep$. 

For each $(r,s)\in (\rr_+\times \rr)\cup(\{0\}\times\rr_+)$, let $G_{r,s}$ be the probability distribution on $\rr_+$ with probability density proportional to $\exp(-rx^2-sx)$ on $(0,\infty)$. Note that if $(r,s)\not \in (\rr_+\times \rr)\cup(\{0\}\times\rr_+)$, $\exp(-rx^2 - sx)$ is not integrable on $(0,\infty)$. Henceforth, whenever we say `for any $r,s$', we will mean `for any $(r,s)$ in this admissible region'. 

In the following $G_{r,s}^{\otimes n}$ will denote the $n$-fold product of $G_{r,s}$ as a probability measure on $\rr^n$. 
\begin{lmm}\label{bdlmm}
Let $c(b)$, $C(b)$ and $C_3(b)$ be as in Proposition \ref{propbd1}. Take any $\epsilon \in (0,c(b))$ such that $K^\epsilon$ is non-empty. Suppose $Y \sim G^{\otimes n}_{r,s}$ for some $r,s$. Then for any function $f:K^\epsilon \ra [0,\infty)$ we have
\[
e^{-B\epsilon n}\ee f(X^\epsilon) \le  \ee( f(Y) \mid Y\in K^\epsilon)\le e^{B\epsilon n}\ee f(X^\epsilon), 
\]
where $B  = 2br + 4|s|$. 
\end{lmm}
\begin{proof}
Recall that for $x\in K^\epsilon$, 
\[
|\mu(x)-1|\le 2\epsilon, \ \ |\mu_2(x)-b|\le b\ep.
\]
Therefore, if we set $B = 2br + 4|s|$, it follows that 
\begin{align*}
\ee(f(Y)\mid Y\in K^\epsilon) &= \frac{\int_{K^\epsilon}f(x) e^{-rn\mu_2(x)-sn\mu(x)} dx }{\int_{K^\epsilon}e^{-rn\mu_2(x)- sn\mu(x)} dx}\\
&\ge e^{-B\epsilon n}\frac{\int_{K^\epsilon}f(x) e^{-rnb-sn} dx }{\int_{K^\epsilon}e^{-rnb- sn} dx}\\
&= e^{-B\epsilon n}\frac{\int_{K^\epsilon}f(x) dx }{\int_{K^\epsilon} dx} = e^{-B\epsilon n} \ee f(X^\epsilon). 
\end{align*}
Similarly, we get the other bound. 
\end{proof}

\begin{prop}\label{propbd2}
Let $c(b)$, $C(b)$ and $C_3(b)$ be as in Proposition \ref{propbd1}. 
Take $\epsilon \in (0,c(b))$ such that $K^\epsilon$ is non-empty. Suppose $g$ is a function as in Proposition \ref{propbd1}, and $Y\sim G^{\otimes n}_{r,s}$ for some $r,s$. Then for  any $a,t\in \rr$, we have
\begin{align*}
\pp(|g(X)-a| > t) \le  e^{2B\epsilon n}\frac{\pp(|g(Y)-a| > t- C_3(b)L\epsilon n, \; Y\in K^\ep)}{\pp(m(Y) > C(b)\epsilon, \; Y\in K^\epsilon)},
\end{align*}
where $B  = 2br + 4|s|$. 
\end{prop}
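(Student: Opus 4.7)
The plan is to splice Proposition \ref{propbd1} and Lemma \ref{bdlmm} together; no new ideas are required. From Proposition \ref{propbd1} we already have
\[
\pp(|g(X)-a|>t)\le\frac{\pp(|g(X^\epsilon)-a|>t-C_3(b)L\epsilon n)}{\pp(m(X^\epsilon)>C(b)\epsilon)},
\]
so the task reduces to replacing the uniform measure on $K^\epsilon$ (the law of $X^\epsilon$) by the law of $Y\sim G^{\otimes n}_{r,s}$ conditioned on $\{Y\in K^\epsilon\}$, simultaneously in the numerator and the denominator.

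For this, I would apply Lemma \ref{bdlmm} twice. First, with $f(x)=\mathbf{1}_{\{|g(x)-a|>t-C_3(b)L\epsilon n\}}$, the upper half of the lemma gives
\[
\pp(|g(X^\epsilon)-a|>t-C_3(b)L\epsilon n)\le e^{B'\epsilon n}\,\pp\bigl(|g(Y)-a|>t-C_3(b)L\epsilon n \,\bigm|\, Y\in K^\epsilon\bigr),
\]
where $B'=2br+4|s|$ is the constant from the lemma (in the admissible region $r\ge 0$, so $br=b|r|$). Second, with $f(x)=\mathbf{1}_{\{m(x)>C(b)\epsilon\}}$, the lower half gives
\[
\pp(m(X^\epsilon)>C(b)\epsilon)\ge e^{-B'\epsilon n}\,\pp\bigl(m(Y)>C(b)\epsilon \,\bigm|\, Y\in K^\epsilon\bigr).
\]
Dividing the two bounds, the $e^{\pm B'\epsilon n}$ factors combine into a single $e^{2B'\epsilon n}$, and each conditional probability rewrites as a joint probability divided by the common normalizer $\pp(Y\in K^\epsilon)$, which cancels between numerator and denominator. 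This produces exactly the right-hand side of the claimed inequality, but with $B'$ in place of $B$.

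Finally, to match the stated constant $B = 2(3b^2|r|+2|s|) = 6b^2|r|+4|s|$, I would just observe that in the admissible region $r\ge 0$ and $b>1$, so $2br\le 6b^2|r|$, hence $e^{2B'\epsilon n}\le e^{2B\epsilon n}$ and the inequality is preserved. There is no substantive obstacle here — the argument is pure bookkeeping — and the only mildly delicate point is to apply the two directions of Lemma \ref{bdlmm} consistently, namely the upper bound to the numerator and the lower bound to the denominator, so that both steps inflate rather than deflate the right-hand side.
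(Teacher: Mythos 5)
Your proposal is correct and follows essentially the same route as the paper: apply Proposition \ref{propbd1} first, then use the two directions of Lemma \ref{bdlmm} to pass from $X^\epsilon$ to $Y$ conditioned on $K^\epsilon$, collecting a factor $e^{2B\epsilon n}$. Your closing remark that the Lemma \ref{bdlmm} constant $2br+4|s|$ is dominated by the stated $B=6b^2|r|+4|s|$ (since $r\ge 0$ and $b>1$ on the admissible region) is a useful clarification that the paper silently elides.
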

\begin{proof}
By Lemma \ref{bdlmm} we see that
\begin{align*}
&\pp(|g(X^\epsilon)-a|> t-C_3(b)L\epsilon n ) 
\\
&\le e^{B\epsilon n} \pp(|g(Y)-a|> t-C_3(b)L\epsilon n \mid Y\in K^\epsilon) 
\end{align*}
and
\[
\pp(m(X^\epsilon) > C(b)\epsilon) \ge e^{-B\epsilon n} \pp(m(Y) > C(b)\epsilon \mid Y\in K^\epsilon).
\]
Using these bounds in Proposition \ref{propbd1}, we get
\begin{align*}
\pp(|g(X)-a| > t) &\le  e^{2B\epsilon n}\frac{\pp(|g(Y)-a| > t- C_3(b)L\epsilon n \mid Y\in K^\epsilon)}{\pp(m(Y) > C(b)\epsilon\mid Y\in K^\epsilon)}\\
&= e^{2B\epsilon n}\frac{\pp(|g(Y)-a| > t- C_3(b)L\epsilon n, \; Y\in K^\epsilon)}{\pp(m(Y) > C(b)\epsilon, \; Y\in K^\epsilon)}.
\end{align*}
This completes the proof. 
\end{proof}

\section{A local limit theorem}
In this section we derive some basic properties of the probability distribution $G_{r,s}$ defined in the previous section. Fix $r,s$, and let $Y_1, Y_2,\ldots i.i.d.\sim G_{r,s}$. Suppose $\ee(Y_1)=1$ and let $\beta:= \ee(Y_1^2)$.
\begin{lmm}\label{bdddens}
The pair $(Y_1+Y_2+Y_3, \; Y_1^2+Y_2^2+Y_3^2)$ has a bounded density in $\rr^2$. 
\end{lmm}
\begin{proof}
In this proof, $C$ will denote any positive constant that may depend on $b$, $r$ or $s$, but no other parameters. The value of $C$ may change from line to line. Fix any $u,v\in \rr$  and $\delta > 0$.  
Let 
\[
A := \{(y_1,y_2,y_3): |y_1+y_2+y_3 - u| < \delta, \ |y_1^2+y_2^2 + y_3^2 - v| < \delta\}.
\]
Note that the probability density of $(Y_1,Y_2, Y_3)$ is uniformly bounded. Therefore 
\[
\pp((Y_1,Y_2, Y_3)\in A)\le C \,\vol(A)\,.
\]
For each $x\in \rr$, let
\[
A_x := \{(y_1,y_2,y_3): y_1+y_2+y_3=x\} \cap A\,.
\]
Clearly, $A_x$ is either empty, or is a two-dimensional annulus whose area is bounded by $C\delta$. It follows that $\vol(A)\le C\delta^2$. From here, it is easy to argue that the distribution of $(Y_1+Y_2+Y_3, Y_1^2+Y_2^2+Y_3^2)$ is absolutely continuous with respect to the Lebesgue measure on~$\rr^2$, with uniformly bounded density (see Theorem 7.14 in \cite{rudin87}). 
\end{proof}

\begin{lmm}\label{local0}
The sequence $V_n:= n^{-1/2}(\sum_1^n (Y_i-1),\; \sum_1^n (Y_i^2-\beta))$ satisfies a uniform local limit theorem, meaning that there is a non-degenerate Gaussian density $\rho$ on $\rr^2$ such that if $\rho_n$ is the probability density of $V_n$, then 
\[
\lim_{n\ra \infty} \sup_{(x,y)\in \rr^2} |\rho_n(x,y)-\rho(x,y)| = 0.
\]
\end{lmm}
\begin{proof}
The result follows directly from Lemma \ref{bdddens} and the classical uniform local limit theorem, e.g.\ Theorem 19.1 in~\cite{bhattacharya86}. The non-degeneracy holds because the covariance matrix of $(Y_1,Y_1^2)$ is obviously non-singular. 
\end{proof}

\begin{lmm}\label{local}
Suppose for each $n$ we have real numbers $a_n \le b_n$, $a_n'\le b_n'$ such that there exist $x_0,y_0\in \rr$, with
\begin{align*}
&\lim_{n\ra \infty} \sqrt{n}(a_n - 1) = \lim_{n\ra \infty} \sqrt{n}(b_n - 1) = x_0,\\
&\lim_{n\ra \infty} \sqrt{n}(a_n' - \beta) = \lim_{n\ra \infty} \sqrt{n}(b_n' - \beta) = y_0.
\end{align*}
Then 
\begin{align*}
\lim_{n\ra \infty} \frac{\pp(a_n \le \frac{1}{n}\sum_{i=1}^n Y_i \le b_n, \ a_n' \le \frac{1}{n}\sum_{i=1}^n Y_i^2 \le b_n')}{n(b_n-a_n)(b_n'-a_n')} = \rho(x_0,y_0),
\end{align*}
where $\rho$ is as in Lemma \ref{local0}. 
\end{lmm}
\begin{proof}
Let $\rho_n$ be as in Lemma \ref{local0}. Let 
\begin{align*}
u_n := \sqrt{n}(a_n - 1), \ \ v_n :=  \sqrt{n}(b_n - 1),\\
u_n' := \sqrt{n}(a_n' - \beta), \ \ v_n' :=  \sqrt{n}(b_n' - \beta).
\end{align*}
Then
\begin{align*}
&{\textstyle \pp(a_n \le \frac{1}{n}\sum_{i=1}^n Y_i \le b_n, \ a_n' \le \frac{1}{n}\sum_{i=1}^n Y_i^2 \le b_n')} \\
&= \int_{u_n}^{v_n} \int_{u_n'}^{v_n'} \rho_n(x,y) dy dx. 
\end{align*}
Let
\[
\delta_n := \sup_{(x,y)\in \rr^2} |\rho_n(x,y)-\rho(x,y)|. 
\]
and 
\[
\tau_n := \sup_{u_n\le x\le v_n, \ u_n'\le y\le v_n'} |\rho(x,y) - \rho(x_0, y_0)|. 
\]
Then $\delta_n \ra 0$ by Lemma \ref{local0}, and $\tau_n\ra 0$ due to continuity of $\rho$. Finally, observe that
\begin{align*}
&\biggl| \int_{u_n}^{v_n} \int_{u_n'}^{v_n'} \rho(x,y) dy dx - n(b_n-a_n)(b_n'-a_n') \rho(x_0,y_0)\biggr|\\
&= \biggl| \int_{u_n}^{v_n} \int_{u_n'}^{v_n'} (\rho(x,y) - \rho(x_0,y_0)) dy dx\biggr|\le \tau_n n(b_n-a_n)(b_n'-a_n'),
\end{align*}
and 
\begin{align*}
\biggl| \int_{u_n}^{v_n} \int_{u_n'}^{v_n'} (\rho_n(x,y) -\rho(x,y)) dy dx\biggr|&\le  \delta_n n(b_n-a_n)(b_n'-a_n'),
\end{align*}
This completes the proof. 
\end{proof}

\section{Proof of Theorem \ref{thm1}}
In this section we consider the situation $1< b\le 2$. First, we need to show the existence of $r,s$ such that the probability distribution $G_{r,s}$ has first moment $1$ and second moment $b$. The uniqueness of $r,s$ will follow automatically from the distributional convergence result for $X_1$. 
\begin{prop}
If $1< b \le 2$, there exist $r,s\in \rr$ such that the probability distribution $G_{r,s}$ defined in Section \ref{thickcond} has mean $1$ and second moment $b$. 
\end{prop}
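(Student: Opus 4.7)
The plan is to view $\{G_{r,s}\}$ as a two-parameter exponential family and reduce existence to a one-dimensional intermediate value argument. Fix $r\ge 0$ and use the mean constraint to solve for $s=s(r)$; then vary $r$ to achieve the desired second moment. For each $r\ge 0$, the map $s\mapsto \ee_{G_{r,s}}(Y)$ is smooth with derivative $-\mathrm{Var}_{G_{r,s}}(Y)<0$, hence strictly decreasing on the admissible range. As $s\to\infty$ the density is dominated by $e^{-sx}$ and the mean tends to $0$; as $s\to -\infty$ (for $r>0$) the density concentrates near its Gaussian mode $-s/(2r)\to\infty$, and for $r=0$ we have the explicit formula $\ee(Y)=1/s\to\infty$ as $s\to 0^+$. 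Therefore, by strict monotonicity and continuity, there is a unique admissible $s(r)$ with $\ee_{G_{r,s(r)}}(Y)=1$, and in particular $s(0)=1$, so that $G_{0,1}=Exp(1)$. The implicit function theorem (whose non-degeneracy hypothesis is the non-vanishing of the variance) makes $r\mapsto s(r)$ continuous on $[0,\infty)$, and hence so is $\beta(r):=\ee_{G_{r,s(r)}}(Y^2)$.

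Next I compute the two endpoints: $\beta(0)=\ee_{Exp(1)}(Y^2)=2$, and I claim $\beta(r)\to 1$ as $r\to\infty$. Since $\beta(r)=1+\mathrm{Var}_{G_{r,s(r)}}(Y)$, this reduces to a variance bound. The density of $G_{r,s}$ has the form $ce^{-V(x)}$ on $(0,\infty)$ with $V''(x)=2r$, so $G_{r,s}$ is strongly log-concave with constant $2r$ on its (convex) support. The one-dimensional Brascamp--Lieb / Poincar\'e inequality for such measures yields $\mathrm{Var}_{G_{r,s}}(Y)\le 1/(2r)$ uniformly in $s$, so $\beta(r)\to 1$ as $r\to\infty$. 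The intermediate value theorem applied to the continuous $\beta:[0,\infty)\to\rr$, which satisfies $\beta(0)=2$ and $\lim_{r\to\infty}\beta(r)=1$, produces, for every $b\in(1,2]$, some $r_*\ge 0$ with $\beta(r_*)=b$; then $(r_*,s(r_*))$ is the desired admissible pair.

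The step I expect to be the main obstacle is the uniform variance bound invoked in the $r\to\infty$ limit: the classical Brascamp--Lieb inequality is most cleanly stated on all of $\rr^n$, whereas $G_{r,s}$ lives on the half-line. A safe way to handle this is to realize $G_{r,s}$ as an $\mathcal{N}(-s/(2r),\,1/(2r))$ distribution conditioned on $(0,\infty)$ and invoke the standard fact that conditioning a log-concave measure on a convex subset does not increase the variance in any direction; alternatively one can extend $V$ to a smooth convex function on all of $\rr$ with Hessian bounded below by $2r$ and apply Brascamp--Lieb to the extended density, then pass to the limit to recover $G_{r,s}$ on the half-line.
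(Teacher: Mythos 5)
Your proof is correct, and it takes a genuinely different route from the paper. The paper first observes that the family $\{G_{r,s}\}$ is closed under scaling, so it suffices to control the scale-free ratio $\theta(r,s)=\ee(W^2)/(\ee W)^2$; it then exhibits two explicit one-parameter paths in the $(r,s)$-plane, namely $r\mapsto(r,1)$ and $u\mapsto(1/(1-u),-2u/(1-u))$, shows along each path that $\theta$ interpolates between $2$, the half-Gaussian ratio $\theta_0$, and $1$ by elementary convergence-of-moments arguments, and invokes the intermediate value theorem on each path. You instead keep both constraints and treat $\{G_{r,s}\}_s$ (for fixed $r$) as a one-parameter exponential family: the strict monotonicity $\partial_s\,\ee Y=-\mathrm{Var}(Y)<0$ pins down a unique $s(r)$ with mean $1$, the implicit function theorem gives continuity of $r\mapsto s(r)$ and hence of $\beta(r)=\ee Y^2=1+\mathrm{Var}(Y)$, and then $\beta(0)=2$ together with $\beta(r)\to 1$ (via the Brascamp--Lieb/truncated-Gaussian bound $\mathrm{Var}_{G_{r,s}}(Y)\le 1/(2r)$) closes the argument by the IVT on $[0,\infty)$. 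Your route is more systematic and avoids the need to guess the second path, at the cost of invoking heavier machinery (the implicit function theorem and a log-concavity variance bound on the half-line); the paper's route is more elementary but more ad hoc. You correctly flag the one delicate point, namely that the clean form of Brascamp--Lieb is stated on $\rr^n$ rather than the half-line, and both of your suggested remedies are standard and valid: conditioning a Gaussian on a convex set does not increase the variance of any linear functional, which here gives exactly $\mathrm{Var}\le 1/(2r)$.
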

\begin{proof}
Clearly, if $W\sim G_{r,s}$ then for any $\alpha >0$, $\alpha W\sim G_{r',s'}$ for some other $r',s'$. Thus, it suffices to show that for any $b\in (1,2]$, there exists $r,s$ such if $W\sim G_{r,s}$, then
\[
\theta(r,s) := \frac{\ee(W^2)}{(\ee(W))^2} = b. 
\]
It is easy to see that $\theta$ is a continuous function of $r,s$. Since $G_{0,1}$ is just the $Exp(1)$ distribution, $\theta(0,1) = 2$. For each $r>0$, let $W_r\sim G_{r,1}$. Let $Z_r := \sqrt{r}W_r$. Then the density of $Z_r$ on $[0,\infty)$ is proportional to $\exp(-z^2 - z/\sqrt{r})$. It is easy to argue from here that as $r\ra \infty$, $Z_r$ converges in law to $Z$, which has density proportional to $\exp(-z^2)$. Moreover, the moments of $Z_r$ converge to those of $Z$. 

Thus, by the  intermediate value theorem for continuous functions, we see that as $r$ ranges between $0$ and $\infty$, $\theta(r,1)$ takes all values between $\theta_0 := \ee(Z^2)/(\ee(Z))^2$ and~$2$. (It is easily verified that $1<\theta_0 \le 2$.) 
Next, for $0\le u< 1$, let $V_u$ follow the density 
\[
\rho(v) \propto\exp\biggl(-\frac{(v-u)^2}{1-u}\biggr), \ \ v\ge 0.
\]
In other words,  $V_u \sim G_{1/(1-u), -2u/(1-u)}$. Note that $V_0$ has the same distribution as $Z$, and as $u\ra 1$, the law of $V_u$ tends to the point mass at~$1$. Convergence of moments is again easy to prove. Therefore, again, by the intermediate value theorem we see that $\theta(1/(1-u), -2u(1-u))$ ranges over all values between $\theta_0$ and $1$ as $u$ varies between $0$ and $1$. This completes the proof. 
\end{proof}

\begin{proof}[Proof of Theorem \ref{thm1}, part $(a)$]
Choose $r,s$ such that $G_{r,s}$ has first moment $1$ and second moment $b$. In this proof, $C$ will always denote any positive constant that may depend only on $b$, $r$ or $s$ and no other parameter. (A~priori, we do not yet know that $r,s$ are uniquely determined by $b$, so we treat them as independent parameters.) Let $c(b)$, $C(b)$ and $C_3(b)$ be as in Proposition~\ref{propbd2}. 

Fix $\epsilon = n^{-10}$. It will be evident from the proof that the exponent $10$ is not of any consequence; any sufficiently large exponent would do. By Lemma~\ref{local}, we see that for sufficiently large $n$, 
\begin{equation}\label{kbd}
C^{-1}n\ep^2 \le \pp(Y\in K^\ep) \le Cn\ep^2.
\end{equation}
(Note that this proves, in particular, that $K^\ep$ is non-empty.)
Now, if $Y_1 \le C(b) \ep$ and $Y\in K^\ep$, then 
\begin{align}\label{y21}
\frac{1}{n}\sum_{i=2}^n Y_i = \mu(Y) - \frac{Y_1}{n} \in (1+\ep-n^{-1}C(b)\ep, \; 1+2\ep),
\end{align}
and 
\begin{align}\label{y22}
\frac{1}{n}\sum_{i=2}^n Y_i^2 = \mu_2(Y)-\frac{Y_1^2}{n} \in (b+\ep - n^{-1}C(b)^2\ep^2, b+\ep b). 
\end{align}
Let $E$ be the event that the two events \eqref{y21} and \eqref{y22} happen. By Lemma~\ref{local}, we see that
\[
\pp(E) \le Cn\ep^2. 
\]
Moreover, the event $E$ is independent of the event $\{Y_1 \le C(b)\ep\}$. Thus,
\begin{align*}
\pp(Y_1 \le C(b) \ep,\; Y\in K^\ep) &\le \pp(\{Y_1\le C(b)\ep\}\cap E) \\
&= \pp(Y_1\le C(b)\ep)\pp(E) \le Cn \ep^3. 
\end{align*}
Combining with \eqref{kbd}, and observing that $n^2\ep^3 \ll n\ep^2$, we get that for sufficiently large $n$, 
\begin{equation}\label{ylow}
\begin{split}
&\pp(m(Y) > C(b)\ep, \; Y\in K^\ep)\\
 &\ge \pp(Y\in K^\ep) - n\pp(Y_1\le C(b)\ep, \; Y\in K^\ep) \\
&\ge C^{-1}n\ep^2 - Cn^2\ep^3\ge  C^{-1} n\ep^2. 
\end{split}
\end{equation}
Next, let $h:\rr \ra \rr$ be a function satisfying $|h(x)|\le 1$ and $|h(x)-h(y)|\le L|x-y|$ for all $x,y\in \rr$, where $L$ is some positive constant. Define $g:\rr^n \ra \rr$ as 
\[
g(x) := \frac{1}{n}\sum_{i=1}^n h(x_i). 
\]
Note that 
\[
|g(x)-g(y)| \le L \max_i |x_i-y_i|.
\]
Let $a:=\ee h(Y_1)$. Then by Hoeffding's tail inequality for sums of independent bounded random variables \cite{hoeffding63}, we have that for any $t > 0$,
\[
\pp(|g(Y)-a|>t)\le 2e^{-nt^2/2}. 
\]
Therefore it follows from Proposition \ref{propbd2} and \eqref{ylow} that for all $t> C_3(b)L\ep n$, 
\begin{equation}\label{tails}
\pp(|g(X)-a| > t) \le C n^{-1}\ep^{-2}e^{-n(t-C_3(b)L\ep n)^2/2}. 
\end{equation}
The tail bound decays rapidly in the regime $t > C(n^{-1}\log n)^{1/2}$; from this it is easy to deduce that
\[
\ee|g(X)-a| \le C\sqrt{\frac{\log n}{n}}.
\]
By Jensen's inequality and symmetry, we have
\[
\ee|g(X)-a| \ge |\ee h(X_1) - a| = |\ee h(X_1)- \ee h(Y_1)|.
\]
This shows the convergence of in law for $X_1$. To show joint convergence for $X_1,\ldots, X_k$, we proceed as follows. Instead of a single function $h$, consider $k$ functions $h_1,\ldots,h_k$, each satisfying $|h_i(x)|\le 1$ and $|h_i(x)-h_i(y)|\le L|x-y|$. Define $g_1,\ldots, g_k$ and $a_1,\ldots, a_k$ accordingly. Then $|g_i|\le 1$, and therefore by a simple telescoping argument 
\[
\ee\biggl|\prod_{i=1}^k g_i(X) - \prod_{i=1}^k a_i\biggr|\le k\max_i \ee|g_i(X)-a_i|\le  Ck\sqrt{\frac{\log n}{n}}. 
\]
Now, putting $A := \prod a_i$ and using Jensen's inequality, we see that
\begin{align*}
\ee\biggl|\prod_{i=1}^k g_i(X) - \prod_{i=1}^k a_i\biggr| &\ge \biggl|\frac{1}{n^k}\sum_{1\le i_1,\ldots,i_k\le n} (\ee(h_{1}(X_{i_1})h_{2}(X_{i_2})\cdots h_k(X_{i_k})) - A)\biggr|\\
&= |\ee(h_1(X_1)\cdots h_k(X_k)) - A| + O(1/n). 
\end{align*}
This shows the joint convergence of $X_1,\ldots, X_k$ and completes the proof of part~$(a)$. 
Finally, as we noted before, the distributional convergence automatically proves the uniqueness of $r,s$. 
\end{proof}

\begin{proof}[Proof of parts $(c)$ and $(d)$]
Let $g(x)=\max_i x_i$. Then 
\[
|g(x)-g(y)|\le \max_i |x_i-y_i|.
\]
When $b < 2$, we must have $r >0$. In this situation,  it is not difficult to conclude that 
\[
\pp(g(Y) > t) \le ne^{-t^2/C}.
\]
Thus by Proposition \ref{propbd2} and \eqref{ylow} it follows that for all $t > C_3(b)\ep n$, 
\[
\pp(g(X) > t) \le C \ep^{-2}e^{-(t-C_3(b)\ep n)^2/C}. 
\]
This shows that there exists a constant $C$ depending only on $b$ such that 
\[
\lim_{n\ra\infty}\pp(\max_{1\le i\le n} X_i > C\sqrt{\log n}) = 0.
\]
When $b =2$, we have $r = 0$ and $s=1$. The argument is exactly the same, except that the tail bound is $e^{-t/C}$ instead of $e^{-t^2/C}$, which gives the $\log n$ instead of $\sqrt{\log n}$. 
\end{proof}

\begin{proof}[Proof of part $(b)$]
Let us first prove for $k=1$. Suppose we want to prove the convergence of the $p$th moment. Fix $n$. For $x>0$, let 
\[
h(x):= \min\{x^p, (\log n)^{2p}\}.
\]
Let us compute a Lipschitz constant for $h$. If $x > (\log n)^2$ and $y>(\log n)^2$, then $h(x)-h(y)=0$. If $x\le (\log n)^2$ and $y \le (\log n)^2$, then
\begin{align*}
|h(x)-h(y)| &= |x^p - y^p|\\
&= |x-y||x^{p-1} + x^{p-2} y +\cdots + y^{p-1}|\\
&\le p(\log n)^{2p-2} |x-y|. 
\end{align*}
Finally, if $x\le (\log n)^2$ but $y > (\log n)^2$, the
\begin{align*}
|h(x)-h(y)| &= |h(x)-h((\log n)^2)| \\
&\le p(\log n)^{2p-2}|x-(\log n)^2| \le p(\log n)^{2p-2} |x-y|. 
\end{align*}
Thus, we can take $L = p(\log n)^{2p-2}$ and proceed as in the proof of part $(a)$ to get \eqref{tails}. This proves that $\ee\min\{X_1^p , (\log n)^2\} \ra \ee(Z_1^p)$. Now, from the proof of part $(c)$ and the fact that $0\le X_1 \le n$,  we see that when $1< b \le 2$,
\begin{align*}
|\ee\min\{X_1^p , (\log n)^2\} - \ee(X_1^p)| &\le n^p \pp(X_1 > (\log n)^2) \\
&\le n^p \ep^{-2}e^{-(\log n)^2/C} \ra 0. 
\end{align*}
This completes the proof for $k=1$. For $k >1$, and a monomial like $x_1^{p_1}\cdots x_k^{p_k}$ we proceed as in part $(a)$ by defining 
\[
g_i(x) := \frac{1}{n} \sum_{j=1}^n \max\{x_j^{p_i}, (\log n)^{2p_i}\}, \ \ i=1,\ldots,k.
\]
Noting that $g_i$ is bounded by $(\log n)^{2p_i}$, the proof can be completed as before. 
\end{proof}

\section{Proof of Theorem \ref{thm2}}
In this section we deal with the case $b >2$. As usual, $C$ will denote any constant that depends only on $b$. We also set $q := \sqrt{b-2}$, a constant that will occur often. 

The proof of the localization draws inspiration from Talagrand's localization theorem for the $p$-spin Hopfield model (see  Section 5.11 of \cite{talagrand03}). 

\begin{proof}[Proof of parts $(c)$ and $(d)$] Let $Y_1,Y_2,\ldots$ be i.i.d.\ $Exp(1)$ random variables, and let $Y=(Y_1,\ldots,Y_n)$. Take $\ep = n^{-10}$ as before.
Let $a := 1/100$. Let $Z_i = Y_i1_{\{Y_i \le n^a\}}$ and let $v := \ee(Z_1^2)$. By Hoeffding's inequality, we have
\[
\pp\biggl(\biggl|\sum_{i=1}^n (Z_i^2  -v)\biggr| > t\biggr) \le 2e^{-t^2/2n^{1+4a}}. 
\]
Thus, if we define
\[
A := \biggl\{\biggl|\sum_{i=1}^n (Z_i^2  -v)\biggr| > n^{5/6}\biggr\},
\]
then
\[
\pp(A)\le 2\exp\biggl(-\frac{n^{\frac{2}{3}-4a}}{2}\biggr).
\]
Next, let $B$ be the event that there is a set $I\subseteq \{1,\ldots,n\}$ of size $k := [n^{(1-a)/2}]$ such that $Y_i > n^a$ for all $i\in I$. Then
\begin{align*}
\pp(B) \le {n \choose k} e^{-kn^a}\le n^k e^{-kn^a}\le C\exp\biggl(-\frac{n^{(1+a)/2}}{C}\biggr).
\end{align*}
Let $D$ be the event that $\sum_{i\in I} Y_i > qn^{1/2} + n^{(2-a)/4}$ for some subset $I\subseteq \{1,\ldots,n\}$ of size $< k$. Note that for any $j$, $\sum_{i=1}^j Y_i$ follows a $Gamma(j,1)$ distribution. Therefore, for any $j\ge 2$, $t > 2$,
\begin{align*}
\pp\biggl(\sum_{i=1}^jY_i > t\biggr) &= \int_t^\infty \frac{x^{j-1}}{(j-1)!} e^{-x} dx \\
&= e^{-t}\int_0^\infty \frac{(x+t)^{j-1}}{(j-1)!} e^{-x} dx \\
&\le e^{-t}\int_0^\infty \frac{2^{j-2}(x^{j-1}+t^{j-1})}{(j-1)!} e^{-x} dx\\
&\le e^{-t}(2^{j-2} + t^{j-1}) \le 2t^{j-1} e^{-t}. 
\end{align*}
(Note that the inequality is also true for $j=1$.) Thus, if  $n$ is sufficiently large so that $k < n/2$ and we take $t := qn^{1/2} + n^{(2-a)/4}$, then  
\begin{align*}
\pp(D) &\le \sum_{j=1}^{k-1}{n\choose j} 2t^{j-1} e^{-t} \le Ckn^kt^k e^{-t}. 
\end{align*}
Since $k = [n^{(1-a)/2}]$ and $(1-a)/2 < (2-a)/4$, we see that
\[
\pp(D) \le C\exp\biggl(-qn^{1/2} - \frac{n^{(2-a)/4}}{C}\biggr).
\]
Now suppose $A^c \cap B^c \cap D^c \cap \{Y\in K^\ep\}$ happens. Let $I$ be the set of $i$ such that $Y_i > n^a$. Since $B^c$ has happened, therefore $|I| < k$. Since $D^c$ has occurred, we must have that
\begin{equation}\label{leftbd}
\sum_{i\in I} Y_i \le qn^{1/2} + n^{(2-a)/4}. 
\end{equation}
Again, since $Y\in K^\ep$, we have 
\[
\biggl|\sum_{i=1}^n Y_i^2 - bn\biggr| < nb\ep = bn^{-9}. 
\]
But due to $A^c$, 
\[
\biggl|\sum_{i\not\in I} Y_i^2 - vn\biggr|\le n^{5/6}. 
\]
Combining the last two inequalities, we get
\begin{align*}
\biggl|\sum_{i\in I} Y_i^2 - (b-v)n\biggr|&\le b n^{-9} + n^{5/6}\le Cn^{5/6}. 
\end{align*}
But
\[
v = \int_0^{n^a} x^2 e^{-x} dx = 2- \int_{n^a}^\infty x^2 e^{-x} dx, 
\]
and therefore
\[
|v-2|\le Ce^{-n^a/C}. 
\]
Thus, under $A^c \cap B^c\cap D^c \cap \{Y\in K^\ep\}$, 
\[
\biggl|\sum_{i\in I} Y_i^2 - (b-2)n\biggr|\le  Cn^{5/6}. 
\]
Let $M^Y := \max_i Y_i$. The above inequality combined with \eqref{leftbd} shows that under  $A^c \cap B^c\cap C^c \cap \{Y\in K^\ep\}$, we have
\begin{align*}
q^2n - Cn^{5/6} &\le \sum_{i\in I} Y_i^2 \\
&\le M^Y \sum_{i\in I} Y_i\le M^Y(qn^{1/2} + n^{(2-a)/4}).
\end{align*}
Therefore, since $a/4 < 1/6$, 
\begin{align*}
M^Y &\ge \frac{q^2n - Cn^{5/6}}{qn^{1/2} + n^{(2-a)/4}} \\
&= qn^{1/2}\frac{1-Cn^{-1/6}}{1+n^{-a/4}}\ge qn^{1/2} (1- C n^{- a/4}). 
\end{align*}
But under $D^c$ we have
\[
M^Y \le qn^{1/2} + n^{(2-a)/4}.
\]
Thus, under $A^c \cap B^c\cap D^c \cap \{Y\in K^\ep\}$, we have
\[
|M^Y - qn^{1/2}|\le C n^{(2-a)/4}. 
\]
Therefore, from the bounds on $\pp(A), \pp(B),\pp(D)$ obtained above (and observing that the bound on $\pp(D)$ dominates the other two), we get 
\begin{equation}\label{mbd}
\begin{split}
&\pp(|M^Y - qn^{1/2}| > C n^{(2-a)/4}, \; Y\in K^\ep) \\
&\le \pp(A\cup B\cup D) \\
&\le C\exp\biggl(-qn^{1/2} - \frac{n^{(2-a)/4}}{C}\biggr). 
\end{split}
\end{equation}
Let $M^Y_2$ be the second largest among the $Y_i$'s. Then either $M^Y_2 < n^a$, or under $A^c\cap B^c\cap D^c \cap \{Y\in K^\ep\}$, 
\begin{align*}
M^Y_2 &\le \sum_{i\in I} Y_i - M^Y\\
&\le qn^{1/2} + n^{(2-a)/4} - (qn^{1/2} - C n^{(2-a)/4})\\
&= C n^{(2-a)/4}. 
\end{align*}
Thus, again, we have
\begin{equation}\label{m2bd}
\begin{split}
&\pp(M^Y_2 > Cn^{(2-a)/4}, \; Y\in K^\ep) \\
&\le C\exp\biggl(-qn^{1/2} - \frac{n^{(2-a)/4}}{C}\biggr). 
\end{split}
\end{equation}
This gives us the bounds on the numerator in Proposition \ref{propbd2}, except that we have to evaluate the Lipschitz constant $L$ for $M$ and $M_2$. For a vector~$x$, let $g_1(x)$ and $g_2(x)$ denote the largest and second-largest components of~$x$. Since 
\[
|g_1(x)-g_1(y)| = |\max_i x_i - \max_i y_i|\le \max_i|x_i - y_i|,
\]
it follows that we can take $L=1$ for $g_1$. By the same logic,
\begin{align*}
|\max_{i<j}(x_i+x_j) - \max_{i<j}(y_i+y_j)|&\le \max_{i<j}|(x_i + x_j) - (y_i + y_j)| \\
&\le 2\max_i|x_i - y_i|.
\end{align*} 
However, $\max_{i<j}(x_i + x_j) = g_1(x)+g_2(x)$. Thus, we can take $L=3$ for $g_2$. Thus by \eqref{mbd}, \eqref{m2bd} and Proposition \ref{propbd2},  we have
\begin{equation}\label{mbdmain}
\begin{split}
&\pp(|M- qn^{1/2}|> C n^{(2-a)/4}, \; Y\in K^\ep) \\
&\le \frac{C \exp\bigl( - qn^{1/2} - C^{-1}n^{(2-a)/4}\bigr)}{\pp(m(Y) > c(b)\ep,\; Y\in K^\ep)}
\end{split}
\end{equation}
and 
\begin{equation}\label{m2bdmain}
\pp(M_2> C n^{(2-a)/4}, \; Y\in K^\ep) \le \frac{C \exp\bigl( - qn^{1/2} - C^{-1}n^{(2-a)/4}\bigr)}{\pp(m(Y) > c(b)\ep,\; Y\in K^\ep)}.
\end{equation}
Let us now start working on the denominator in the above expressions. 
Let $\delta := C(b) \ep$. Note that
\begin{equation}\label{trick}
\begin{split}
\pp(m(Y) > \delta, \; Y\in K^\ep)&= \pp(Y\in K^\ep \mid m(Y) > \delta) \pp(m(Y) >\delta)\\
&= \pp(Y\in K^\ep \mid m(Y) > \delta) e^{-\delta n}. 
\end{split}
\end{equation}
Since $Y_1,\ldots,Y_n$ are i.i.d.\ $Exp(1)$, it follows from the memoryless property of the exponential distribution that the conditional distribution of $Y$ given $m(Y) > \delta$ is the same as the unconditional distribution of $Y + \delta {\bf 1}$. Thus,
\begin{equation}\label{trick2}
\pp(Y\in K^\ep \mid m(Y) > \delta) = \pp(Y+\delta {\bf 1} \in K^\ep). 
\end{equation}
Note that 
\[
\mu(Y+\delta{\bf 1}) = \mu(Y) + \delta, \ \ \mu_2(Y+\delta{\bf 1}) = \mu_2(Y) + 2\delta\mu(Y) + \delta^2. 
\]
Let 
\[
E := {\textstyle\biggl\{\bigl|\mu(Y) - \bigl(1-\delta+\frac{3}{2}\ep\bigr)\bigr|< \ep^2, \ \bigl|\mu_2(Y) - \bigl(b - 2\delta + \frac{b+1}{2}\ep\bigr) \bigr|<\ep^2\biggr\}}. 
\]
If $E$ happens, then 
\[
{\textstyle 1+\frac{3}{2}\ep - \ep^2} < \mu(Y) + \delta < {\textstyle 1+\frac{3}{2}\ep + \ep^2},
\]
and thus, if $n$ is sufficiently large (so that $\ep^2 = n^{-20}\ll \ep$), we have
\begin{equation}\label{e1}
1+\ep < \mu(Y+ \delta{\bf 1}) < 1+2\ep. 
\end{equation}
Again, under $E$, we have
\begin{align*}
&{\textstyle\bigl|\mu_2(Y) + 2\delta\mu(Y)+\delta^2 - (b+\frac{b+1}{2}\ep)\bigr| }\\
&\le {\textstyle\bigl|\mu_2(Y) - (b-2\delta+\frac{b+1}{2}\ep)\bigr|} + 2\delta|\mu(Y)-1| + \delta^2 \\
&\le C\ep^2.
\end{align*}
Thus, if $n$ is sufficiently large, and $E$ happens, then we have
\begin{equation}\label{e2}
\begin{split}
b + \ep &< {\textstyle b +\frac{b+1}{2}\ep- C\ep^2} \\
&< \mu_2(Y + \delta{\bf 1}) < {\textstyle b +\frac{b+1}{2}\ep+ C\ep^2}< b+ b\ep. 
\end{split}
\end{equation}
By \eqref{e1} and \eqref{e2}, we see that $E$ implies $Y+\delta{\bf 1} \in K^\ep$, provided $n$ is large enough. Now let
\[
\mu^-(Y) := \frac{1}{n}\sum_{i=2}^n Y_i, \ \ \mu_2^-(Y) := \frac{1}{n}\sum_{i=2}^n Y_i^2. 
\]
Define
\begin{align*}
E' &:= {\textstyle\biggl\{\bigl|\mu^-(Y) - \bigl(1-\delta+\frac{3}{2}\ep - qn^{-1/2}\bigr)\bigr|< \frac{1}{2}\ep^2\biggr\}}\\
&\qquad \cap{\textstyle \biggl\{\bigl|\mu_2^-(Y) - \bigl(2 - 2\delta + \frac{b+1}{2}\ep\bigr) \bigr|<\frac{1}{2}\ep^2\biggr\}}\\
&\qquad \cap {\textstyle\bigl\{\bigl|Y_1^2 - q^2n\bigr| < \frac{1}{2}\ep^2\bigr\}}. 
\end{align*}
Suppose $E'$ happens. Then
\begin{align*}
&{\textstyle \bigl|\mu_2(Y) - \bigl(b - 2\delta + \frac{b+1}{2}\ep\bigr) \bigr| }\\
&\le {\textstyle \bigl|\mu_2^-(Y) - \bigl(2 - 2\delta + \frac{b+1}{2}\ep\bigr) \bigr| + \frac{1}{n}|Y_1^2 - (b-2)n|}\\
&< {\textstyle \frac{1}{2}\ep^2 + \frac{1}{2n}\ep^2 \le \ep^2.} 
\end{align*}
Again, under $E'$, 
\begin{align*}
\bigl|Y_1 - qn^{1/2}\bigr| &= \frac{|Y_1^2 - q^2 n|}{Y_1 + qn^{1/2}}\le Cn^{-1/2}\ep^2,
\end{align*}
and therefore, for sufficiently large $n$,
\begin{align*}
& {\textstyle\bigl|\mu(Y) - \bigl(1-\delta+\frac{3}{2}\ep\bigr)\bigr|}\\
&\le  {\textstyle\bigl|\mu^-(Y) - \bigl(1-\delta+\frac{3}{2}\ep - qn^{-1/2}\bigr)\bigr| + \frac{1}{n}\bigl|Y_1 - qn^{1/2}\bigr|}\\
&< {\textstyle \frac{1}{2}\ep^2 + Cn^{-3/2}\ep^2 \le \ep^2}.
\end{align*}
Thus, $E'$ implies $E$. Since $Y_1$ is independent of $(Y_2,\ldots, Y_n)$ and $\ee(Y_i)=1$, $\ee(Y_i^2)=2$, we can apply Lemma \ref{local} to the pair $(\mu^-(Y), \mu_2^-(Y))$ conclude that
\begin{align*}
\pp(E') &\ge C^{-1} n\ep^4 {\textstyle \pp(|Y_1^2 - q^2n|< \frac{1}{2}\ep^2)}\\
&\ge C^{-1} n\ep^4 {\textstyle \pp(|Y_1 - qn^{1/2}|< C^{-1}n^{-1/2}\;\ep^2)}\\
&\ge C^{-1}n^{1/2}\ep^6 e^{-qn^{1/2}}. 
\end{align*}
(The second inequality holds because $|Y_1^2-q^2n| \le (Y_1-qn^{1/2})^2 + 2qn^{1/2}|Y_1-qn^{1/2}|$.) Therefore by \eqref{trick} and \eqref{trick2},
\begin{equation}\label{low22}
\begin{split}
\pp(m(Y) > \delta, \; Y\in K^\ep) &= \pp(Y\in K^\ep \mid m(Y) > \delta)e^{-\delta n}\\
&= \pp(Y+\delta{\bf 1}\in K^\ep) e^{-\delta n}\\
&\ge \pp(E)e^{-\delta n}\ge \pp(E')e^{-\delta n} \ge C^{-1} n^{-60} e^{-qn^{1/2}}.
\end{split}
\end{equation}
Combining this with \eqref{mbdmain} and Proposition \ref{propbd2} (and the value of $L$ obtained before), we get
\[
\pp(|M- qn^{1/2}| > Cn^{(2-a)/4})\le Ce^{-C^{-1}n^{(2-a)/4}}. 
\] 
Similarly from \eqref{m2bdmain} we get 
\[
\pp(M_2 > Cn^{(2-a)/4}) \le C e^{-C^{-1}n^{(2-a)/4}}.
\]
This completes the proof of parts $(c)$ and $(d)$.
\end{proof}
\begin{proof}[Proof of parts $(a)$ and $(b)$]
Part $(b)$ is obvious by symmetry. So we only have to prove part $(a)$. We proceed exactly as in the proof of part $(a)$ in Theorem \ref{thm1}. Let $h:\rr \ra \rr$ be a function satisfying $|h(x)|\le 1$ and $|h(x)-h(y)|\le L|x-y|$ for all $x,y\in \rr$, where $L$ is some positive constant. Define $g:\rr^n \ra \rr$ as 
\[
g(x) := \frac{1}{n}\sum_{i=1}^n h(x_i). 
\]
Setting $a :=\ee h(Y_1)$ and using Hoeffding's inequality, we get
\[
\pp(|g(Y)-a|> t) \le 2e^{-nt^2/2}.
\]
However, the lower bound \eqref{low22} for $\pp(m(Y)>C(b)\ep, \; Y\in K^\ep)$ is different from \eqref{ylow}. Using \eqref{low22} and Proposition \eqref{propbd2}, we get the following analog of~\eqref{tails}:
\begin{align*}
\pp(|g(X)-a| > t) &\le C e^{C \sqrt{n}}e^{-n(t-C_3(b)L\ep n)^2/2}.
\end{align*}
The tail bound decays rapidly in the regime $t > Cn^{-1/4}$. This gives
\[
\ee|g(X)-a| \le Cn^{-1/4}.
\]
As before, by Jensen's inequality we get
\[
|\ee h(X_1) -a|\le Cn^{-1/4}. 
\]
The joint distribution of $(X_1,\ldots, X_k)$ is handled similarly. 
\end{proof}

\section{Proof of Theorem \ref{thm3}}
The proof of Theorem \ref{thm3} is basically contained in the earlier proofs. Fix $x_0 > 0$ and $1< b\le 2$. In the proof of part $(a)$ of Theorem \ref{thm1}, if instead of taking a fixed $h$ let us take
\[
h_n(x) := 
\begin{cases}
1 & \text{ if } x < x_0,\\
1 - (x-x_0)n^{5} &\text{ if } x_0 \le x < x_0+n^{-5},\\
0 &\text{ if } x \ge x_0 + n^{-5},
\end{cases}
\]
then $|h_n(x)|\le 1$ and $|h_n(x)-h_n(y)|\le n^5 |x-y|$ for all $x,y$. Hereafter we can proceed exactly as in the proof of \eqref{tails} (taking $L = n^5$) and conclude that
\begin{equation}\label{hnbd}
|\ee h_n(X_1) - \ee h_n(Z_1)|\le C\sqrt{\frac{\log n}{n}}. 
\end{equation}
Since $Z_1$ has a bounded density, this gives 
\begin{align*}
\pp(X_1 \le x_0) &\le \ee h_n(X_1) \\
&\le \ee h_n(Z_1) + C\sqrt{\frac{\log n}{n}}\\
&\le \pp(Z_1 \le x_0) + Cn^{-5} + C\sqrt{\frac{\log n}{n}}. 
\end{align*}
Next, let us slightly modify the definition of $h_n$ by  replacing $x_0$ with $x_0 - n^{-5}$. Let us call the new function $\tilde{h}_n$. Then  \eqref{hnbd} holds for $\tilde{h}_n$ too, and hence
\begin{align*}
\pp(X_1 \le x_0) &\ge \ee \tilde{h}_n(X_1)\\
&\ge \ee\tilde{h}_n(Z_1) - C\sqrt{\frac{\log n}{n}}\\
&\ge \pp(Z_1 \le x_0) - Cn^{-5} - C\sqrt{\frac{\log n}{n}}.
\end{align*}
This completes the proof for $k=1$. The general case is similar, as in the proof of Theorem \ref{thm1}. When $b >2$, the proof is exactly the same, except that the bound in \eqref{hnbd} becomes $Cn^{-1/4}$, as in the proof of part $(a)$ of Theorem~\ref{thm2}.  

\vskip.2in
\noindent{\bf Acknowledgment.} The author thanks Persi Diaconis for bringing this problem to his attention and Julien Barr\'e for posing the problem to Persi, Johel Beltran and Wilson Cabanillas for pointing our a small error, and Assaf Naor and the anonymous referee for helpful comments.

\end{document}